\documentclass[11pt]{article}
\usepackage{amsmath}
\usepackage{color}
\setlength{\textwidth}{6.5in} \setlength{\topmargin}{-0.2in}
\setlength{\textheight}{9.0in} \setlength{\oddsidemargin}{0in}

\newcommand{\Limsup}{\mathop{{\rm Lim}\,{\rm sup}}}
\def\disp{\displaystyle}

\def\tto{\;{\lower 1pt \hbox{$\rightarrow$}}\kern -10pt
\hbox{\raise 2pt \hbox{$\rightarrow$}}\;}
\def\Hat{\widehat}
\def\hat{\widehat}

\def\Bar{\overline}
\def\ra{\rangle}
\def\la{\langle}
\def\ve{\varepsilon}
\def\B{I\!\!B}
\def\IN{I\!\!N}

\def\h{\hfill\Box}
\def\R{I\!\!R}

\def\ox{\bar{x}}

\def\oy{\bar{y}}

\def\ou{\bar{u}}

\def\inte{\mbox{\rm int}\,}

\def\gph{\mbox{\rm gph}\,}

\def\epi{\mbox{\rm epi}\,}

\def\dom{\mbox{\rm dom}\,}

\def\h{\hfill\triangle}
\def\dn{\downarrow}
\def\O{\Omega}

\def\ph{\varphi}
\def\emp{\emptyset}
\def\st{\stackrel}
\def\oR{\Bar{\R}}

\def\lm{\lambda}
\def\gg{\gamma}

\def\al{\alpha}
\def\kk{\kappa}

\def\vt{\vartheta}
\newcounter{lk}
\def\Limsup{\mathop{{\rm Lim}\,{\rm sup}}}

\begin{document}
\newtheorem{Theorem}{Theorem}[section]
\newtheorem{Proposition}[Theorem]{Proposition}
\newtheorem{Remark}[Theorem]{Remark}
\newtheorem{Lemma}[Theorem]{Lemma}
\newtheorem{Corollary}[Theorem]{Corollary}
\newtheorem{Conjecture}[Theorem]{Conjecture}
\newtheorem{Definition}[Theorem]{Definition}
\newtheorem{Example}[Theorem]{Example}
\renewcommand{\theequation}{\thesection.\arabic{equation}}
\normalsize
\def\proof{
\normalfont
\medskip
{\noindent\itshape Proof.\hspace*{6pt}\ignorespaces}}
\def\endproof{$\h$ \vspace*{0.1in}}
\begin{center}
\vspace*{0.3in} {\bf SECOND-ORDER GROWTH, TILT STABILITY,\\ AND  METRIC REGULARITY OF THE SUBDIFFERENTIAL}\\[2ex]
D. DRUSVYATSKIY\footnote{School of Operations Research and Information Engineering, Cornell University, Ithaca, NY 14853, USA, Email: dd379@cornell.edu.}, B. S. MORDUKHOVICH\footnote{Department of Mathematics, Wayne State University, Detroit, MI 48202, USA, Email: boris@math.wayne.edu. Research of this author was partially supported by the USA National Science Foundation under grant DMS-1007132, by the Australian Research Council under grant DP-12092508, and by the Portuguese Foundation of Science and Technologies under grant MAT/11109.} and T. T. A. NGHIA\footnote{Department of Mathematics, Wayne State University, Detroit, MI 48202, USA, Email: nghia@math.wayne.edu. Research of this author was partially supported by the USA National Science Foundation under grant DMS-1007132.}
\end{center}

\small{\bf Abstract.} This paper sheds new light on several interrelated topics of second-order variational analysis, both in finite and infinite-dimensional settings. We establish new relationships between second-order growth conditions on functions, the basic properties of metric regularity and subregularity of the limiting subdifferential, tilt-stability of local minimizers, and positive-definiteness/semidefiniteness properties of the second-order subdifferential (or generalized Hessian).\vspace*{0.05in}

{\bf Key words.} variational analysis, quadratic growth, first-order and second-order generalized differentiation,
metric regularity and subregularity, prox-regular functions, tilt stability in optimization\vspace*{0.05in}

{\bf AMS subject classifications.} 49J52, 49J53, 90C31
\normalsize

\section{Introduction}
\setcounter{equation}{0}

This work is devoted to second-order variational analysis of extended-real-valued lower semicontinuous (l.s.c.) functions $f\colon X\to\oR:=(-\infty,\infty]$, which naturally appear, in particular, in problems of constrained optimization, even when the initial data is smooth. For more details we refer the reader to the books \cite{BS,M1,RW} and the recent publications mentioned below with the bibliographies therein. Despite serious achievements attained in this area, there are more questions than answers at the current stage of development in both second-order variational theory and its applications.

The main attention of this paper is paid to the study of the following three important interrelated issues in second-order variational analysis: {\em quadratic growth} of an extended-real-valued function, {\em metric regularity} properties of its limiting subdifferential, and {\em tilt stability} of its local minimizers. Although the general framework of this paper is infinite-dimensional, the major results obtained below are new even in finite dimensions.

The underlying notion of {\em tilt-stable local minimizers} was introduced by Poliquin and Rockafellar \cite{PR2} who characterized it for prox-regular and subdifferentially continuous functions on $\R^n$ via the second-order subdifferential/generalized Hessian in the sense of Mordukhovich \cite{M0}; see Section~4 for precise definitions and more details. Quite recently, Mordukhovich and Nghia \cite{MN} have extended this characterization to the settings of Hilbert spaces by using a new notion of the combined second-order subdifferential; see Section~4.

From different prospectives, Bonnans and Shapiro \cite{BS} studied a {\em second-order/quadratic growth condition} (labeled by them as ``uniform quadratic growth with respect to the tilt parametrization") for conic programs with ${\cal C}^2$-smooth data and characterized it in terms of tilt-stable minimizers of such programs in general Banach spaces. This line of development was continued by Lewis and Zhang \cite{LZ} and by Drusvyatskiy and Lewis \cite{DL} who established the equivalence of uniform quadratic growth to tilt stability of local minimizers for, respectively, ${\cal C}^2$-partly smooth functions under a certain nondegeneracy condition and the general class of prox-regular and subdifferentially continuous functions on finite dimensional spaces. The latter result was further extended with a different proof by Mordukhovich and Nghia \cite{MN} to the class of prox-regular and subdifferentially continuous functions defined on Asplund (in particular, reflexive Banach) spaces.

In other lines of development, Arag\'on and Geoffroy \cite{AG} established remarkable equivalences between appropriate uniform quadratic growth conditions for an l.s.c.\ convex function $f\colon X\to\oR$ on a Hilbert space $X$ and various {\em metric regularity} properties (see Definition~\ref{mr-def}) of the subgradient mapping $\partial f$ in the sense of convex analysis. In the case of $X=\R^n$ their characterization of strong metric regularity of $\partial f$ has been recently extended by Drusvyatskiy and Lewis \cite{DL} to the limiting subdifferential $\partial f$ (see Definition~\ref{subdif}) of any l.s.c.\ subdifferentially continuous  function; see also Lewis and Zhang \cite{LZ} for the previous result concerning prox-regular ${\cal C}^2$-partly smooth functions. Furthermore, Mordukhovich and Nghia \cite{MN} justified the equivalence between uniform quadratic growth of an l.s.c.\ function $f\colon X\to\oR$ and strong metric regularity of its limiting subdifferential, with precise modulus relationships in the growth and strong  metric regularity properties and without imposing the subdifferential continuity of $f$, in the case of an Asplund space $X$. Finally, in the very fresh preprint \cite{AG_new} Arag\'on and Geoffroy extended their characterizations from \cite{AG} to convex functions defined on arbitrary Banach spaces with some relationships between constants of the corresponding growth conditions and regularity properties.\vspace*{0.05in}

The current work aims to achieve new results in the aforementioned directions by pushing the known characterizations beyond the convex and finite-dimensional settings and all with new illuminating arguments. After presenting in Section~2 the needed preliminary material, we devote Section~3 to the comprehensive study of {\em quantitative} (with moduli) relationships between second-order growth conditions for an
extended-real-valued l.s.c.\ function on an Asplund space and metric regularity and subregularity properties of its (limiting) subdifferential listed in Definition~\ref{mr-def}. Our results for the subdifferential metric subregularity and its strong counterpart complement those concurrently and independently obtained by Arag\'on and Geoffroy \cite{AG_new} for l.s.c.\ convex functions on Banach spaces, while the proofs of some implications in \cite{AG_new} remain valid without convexity for general l.s.c.\ functions on Asplund spaces. Note to this end that our modulus estimates are essentially better than those derived in \cite{AG_new} and, in fact, solve affirmatively the conjecture formulated therein; see more discussions in Section~3.

Section~4 is largely motivated by our intention to extend the known equivalences between metric regularity and strong metric regularity properties of the gradient mappings for ${\cal C}^2$-smooth functions, characterized via the classical Hessian, to the general class of prox-regular and subdifferentially continuous functions on $\R^n$. Developing the technique initiated in \cite{MN}, we obtain such relationships (some of them are given in the Hilbert space setting) in terms of the {\em generalized Hessian} and show that they characterize tilt-stable local minimizers of such functions. However, it is done under an additional assumption on {\em positive-semidefiniteness} of the generalized Hessian that is automatic for ${\cal C}^2$-functions at local minimizers. We conjecture that this condition is necessary. A direct consequence of our approach is a much simplified proof of the aforementioned result from Poliquin and Rockafellar \cite{PR2} characterizing tilt-stable local minimizers of prox-regular functions via the generalized Hessian.

The final Section~5 contains concluding remarks concerning future research.

\section{Preliminaries}
\setcounter{equation}{0}

In this section we summarize some of the fundamental tools of variational analysis and generalized differentiation using basically standard terminology and notation; see, e.g., \cite{M1,RW}.

Throughout the paper we let $(X,\|\cdot\|)$ and $(Y,\|\cdot\|)$ denote {\em Asplund spaces}, i.e., such Banach spaces where every separable subspace has a separable dual. This class of spaces is rather broad including, e.g., every reflexive Banach spaces. The symbol $\to$ always denotes the convergence relative to the distance $d(\cdot,\cdot)$ induced by the norm while the arrow $\xrightarrow{w^*}$ signifies the weak$^{*}$ convergence in the dual space $X^*$. The closed ball centered at $x\in X$ of radius $r$ is denoted by $\B_{r}(x)$ while the closed unit and dual unit balls are denoted by $\B$ and $\B^{*}$, respectively. The {\em distance function} associated with a nonempty set $\O\subset X$ is defined by
$$
d(x;\O):=\inf_{y\in\O}d(x,y),\quad x\in\O.
$$
A set-valued mapping $F\colon X\tto Y$ between Asplund spaces, is a mapping assigning to each point $x\in X$ a subset $F(x)\subset Y$. For such a mapping the {\em domain} and {\em graph} are defined by
$$
\dom F:=\big\{x\in X\big|\;F(x)\ne\emp\big\}\;\quad\mbox{ and }\quad\;\gph F:=\big\{(x,y)\in X\times Y\big|\;y\in F(x)\big\},
$$
respectively. For a mapping $F\colon X\tto X^{*}$ between an Asplund space $X$ and its dual $X^{*}$ we define the sequential {\em Painlev\'{e}-Kuratowski outer limit} by
\begin{eqnarray}\label{pk}
\Limsup_{x{\to}\ox}F(\bar{x}):=\big\{x^{*}\in X^{*}\big|\,\exists\textrm{ seqs. }x_k\to\bar{x}\textrm{ and }x^{*}_k\xrightarrow{w^*}x^{*} \textrm{ with }x^{*}_k\in F(x_k)\textrm{ for all }\;k\big\}.
\end{eqnarray}
Since much of our analysis revolves around local properties of graphs of set-valued mappings, the following notion arises naturally.

\begin{Definition} {\bf (localizations of set-valued mappings).} {\rm A set-valued mapping $\Hat F\colon X\tto Y$ is a {\sc localization} of a mapping $F\colon X\tto Y$ around $(\ox,\oy)\in\gph F$ if there are neighborhoods $U$ of $\ox$ and $V$ of $\oy$ such that the relationship
$$
\gph\Hat F=\gph F\cap(U\times V)\quad\textrm{ holds}.
$$}
\end{Definition}

Stability theory, from a variational-analytic point of view, revolves around the basic notion of metric regularity and its close relatives. These are the key concepts we study in the current work. For more details and further references see, e.g., \cite{DR,met_ioffe,M1}.

\begin{Definition}{\bf (metric regularity and subregularity properties of mappings).}\label{mr-def} {\rm Consider a set-valued mapping $F\colon X\tto Y$ and a pair $(\ox,\oy)$ from its graph.
\begin{itemize}
\item We say that $F$ is {\sc metrically regular} with modulus $\kk>0$ around $(\ox,\oy)$ if there are neighborhoods $U$ of $\ox$ and $V$ of
$\oy$ such that the inequality
\begin{equation}\label{mr}
d\big(x;F^{-1}(y)\big)\le\kk\,d\big(y;F(x)\big)\quad\quad\mbox{ holds for all }\;x\in U\;\mbox{ and }\;y\in V.
\end{equation}
If in addition the inverse mapping $F^{-1}$ has a single-valued localization around $(\oy,\ox)$, then $F$ is {\sc strongly metrically regular} with modulus $\kk>0$ around $(\ox,\oy)$.
\item We say that $F$ is {\sc metrically subregular} with modulus $\kk>0$ at $(\ox,\oy)$ if there is a neighborhood $U$ of $\ox$ such that
the inequality
\begin{eqnarray}\label{ms}
d\big(x;F^{-1}(\oy)\big)\le\kk d\big(\oy;F(x)\big)\quad\quad\mbox{ holds for all}\quad x\in U.
\end{eqnarray}
If in addition the relationship $F^{-1}(\bar{y})\cap U=\{\bar{x}\}$ holds, then $F$ is {\sc strongly metrically subregular} with modulus $\kk>0$ at $(\ox,\oy)$.
\end{itemize}}
\end{Definition}

Metric regularity in \eqref{mr} asserts a kind of {\em Lipschitzian/linear rate} dependence of the distance of a trial point $x$ from a solution set $F^{-1}(y)$ in terms of the residual quantity $d(y;F(x))$ for all $(x,y)$ {\em around} the reference pair $(\ox,\oy)\in\gph F$. Knowledge of such a dependence is useful since this residual can in principle be easily calculated. Subregularity \eqref{ms}, a weaker notion, requires a similar estimate except the solution set $F^{-1}(\bar{y})$ remains {\em fixed}. In turn, the strong versions of metric regularity and subregularity enforce some local single-valuedness properties of the inverse mapping $F^{-1}\colon Y\tto X$. Note that the ``around" properties of metric regularity and strong metric regularity are {\em robust/stable} with respect to perturbations of the reference point while its ``at" subregularity counterparts are not. It is worth mentioning to avoid confusions that subregularity is also known (particularly in the Russian literature; see, e.g., \cite{met_ioffe}) as ``metric regularity {\em at} the point."

In our current work we consider functions taking values in the extended real line $\oR:=(-\infty,\infty]$. For such an extended-real-valued function $f:X\to\oR$ we define the {\em domain} of $f$ to be
$$
\mbox{\rm dom}\,f:=\big\{x\in X\big|\;f(x)<\infty\big\}
$$
and the {\em epigraph} of $f$ to be
$$
\mbox{\rm epi}\,f:=\big\{(x,\al)\in X\times\R\big|\;\al\ge f(x)\big\}.
$$
We always assume that functions $f\colon X\to\overline{\R}$ are {\em proper}, in the sense that they are not identically equal to $\infty$. Furthermore, unless otherwise stated, all the functions under consideration in this paper are {\em lower semicontinuous} around the reference point.

The fundamental tools for studying general nonsmooth function are subdifferentials. The following two subdifferential notions are used in this paper.

\begin{Definition}{\bf (subdifferentials of functions).}\label{subdif} {\rm Let $f\colon X\to\oR$, and let $\bar{x}\in\dom f$.
\begin{itemize}
\item  The {\sc regular/Fr\'echet subdifferential} (known also as the presubdifferential or viscosity subdifferential) of $f$ at $\ox$ is
\begin{eqnarray}\label{2.1}
\Hat\partial f(\ox):=\Big\{x^*\in X^*\Big|\;\liminf_{x\to\ox}\frac{f(x)-f(\ox)-\la x^*,x-\ox\ra}{\|x-\ox\|}\ge0\Big\}.
\end{eqnarray}
\item The {\sc limiting/Mordukhovich subdifferential} (known also as the basic or general subdifferential) of $f$ at $\ox$ is defined
via \eqref{pk} by
\begin{eqnarray}\label{2.2}
\partial f(\ox):=\Limsup_{x\st{f}{\to}\ox}\Hat\partial f(x),
\end{eqnarray}
where the symbol $x\st{f}{\to}\ox$ means that $x\to\ox$ with $f(x)\to f(\ox)$.
\end{itemize}}
\end{Definition}

For convex functions $f\colon X\to\overline{\R}$, both subdifferentials $\Hat\partial f(\ox)$ and $\partial f(\ox)$ reduce to the subdifferential of convex analysis \cite[Theorem~1.93]{M1}. On the other hand, when $f$ is ${\cal C}^1$-smooth these subdifferentials are singleton sets consisting of the Fr\'echet derivative $\nabla f(\bar{x})$.

\section{Characterizing Regularity Properties of the Subdifferential}
\setcounter{equation}{0}

In this section we relate {\em quadratic growth} properties of a function $f$ to the aforementioned regularity properties of the subdifferential $\partial f$ in \eqref{2.2}. Namely, we will see that regularity of $\partial f$ at a {\em ``near" local minimizer} always implies a quadratic growth condition on the function. The converse is not for free; it only holds under an additional {\em uniformity condition} on the subdifferentials holding, in particular, in the case of convex functions.

We begin by analyzing metric {\em subregularity} of the subdifferential. To this end, the following result complements that of \cite[Theorem~2.1]{AG_new} and answers in the {\em affirmative} the question posed in \cite[Remark~2.2(i)]{AG_new}, with the {\em sharpest possible bound} for the constant $\al$ in \eqref{3.2}. We should also mention that considering a ``near'' local minimizer in (\ref{3.1}), rather than a local minimizer, is not only for the sake of generality, but is an essential idea of the proof.

\begin{Theorem}{\rm\bf (metric subregularity of the subdifferential).}\label{thm1} Given $f\colon X\to\overline{\R}$ and a pair $(\ox,\ox^{*})\in\gph\partial f$, consider the following two statements:

{\bf (i)} The subdifferential $\partial f$ is metrically subregular at $(\ox,\ox^*)$ with modulus $\kk>0$ and there are real numbers $r\in(0,\kk^{-1})$ and $\delta>0$ such that
\begin{eqnarray}\label{3.1}
f(x)\ge f(\ox)+\la\ox^*,x-\ox\ra-\frac{r}{2}d^2\big(x;(\partial f)^{-1}(\ox^*)\big)\quad\mbox{for all}\quad x\in\B_\delta(\ox).
\end{eqnarray}

{\bf(ii)} There are real numbers $\al,\eta>0$ such that
\begin{eqnarray}\label{3.2}
f(x)\ge f(\ox)+\la\ox^*,x-\ox\ra+\frac{\al}{2}d^2\big(x;(\partial f)^{-1}(\ox^*)\big)\quad\mbox{for all}\;\;x\in\B_\eta(\ox).
\end{eqnarray}
Then implication $[{\bf (i)}\Longrightarrow{\bf (ii)}]$ holds, where $\al$ may be chosen arbitrarily in $(0,\kk^{-1})$. Furthermore, the converse implication $[{\rm\bf (ii)}\Longrightarrow{\rm\bf(i)}]$ also holds if in addition there is some $\beta\in[0,\al)$ with
\begin{eqnarray}\label{3.3}
f(u)\ge f(x)+\la x^*,u-x\ra-\frac{\beta}{2}d^2\big(x;(\partial f)^{-1}(\ox^*)\big)\;\mbox{for all}\;(u,\ox^*),(x,x^*)\in\gph\partial f\cap \B_\eta(\ox,\ox^*).
\end{eqnarray}
\end{Theorem}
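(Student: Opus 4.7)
The plan is to prove both implications after reducing to the tilted and normalized function $g(y):=f(y)-\la\ox^*,y-\ox\ra-f(\ox)$, for which $g(\ox)=0$, $S:=(\partial f)^{-1}(\ox^*)=\{y:0\in\partial g(y)\}$, and the three displays \eqref{3.1}, \eqref{3.2}, \eqref{3.3} read respectively as $g(y)\ge-\tfrac{r}{2}d^2(y;S)$, $g(y)\ge\tfrac{\al}{2}d^2(y;S)$, and $g(u)\ge g(x)+\la v,u-x\ra-\tfrac{\bb}{2}d^2(x;S)$ for $u\in S$ and $v\in\partial g(x)$ close to the reference pair. Applying \eqref{3.3} in both directions (with $x=\ox$ and then $u=\ox$) yields the preliminary identity $g\equiv 0$ on $S$ locally around $\ox$, which will be crucial for the converse.

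\emph{The implication} (i)$\Rightarrow$(ii). Fix $\al\in(0,\kk^{-1})$ and set $\psi(y):=g(y)-\tfrac{\al}{2}d^2(y;S)$, which is l.s.c.\ with $\psi(\ox)=0$; the goal is $\psi\ge 0$ on a neighborhood of $\ox$. Shrinking if necessary, assume that \eqref{3.1} and the bound $d(y;S)\le\kk\,d(0;\partial g(y))$ both hold on $\bar\B_{\eta_0}(\ox)$, on which $\psi$ is bounded below. Suppose for contradiction that $\mu:=\inf_{\bar\B_{\eta_0}(\ox)}\psi<0$. For each small $\ve>0$, pick $y_\ve\in\bar\B_{\eta_0}(\ox)$ with $\psi(y_\ve)\le\mu+\ve^2$, and apply Ekeland's variational principle with scale $\lm=\ve$ to obtain $\tilde y_\ve\in\bar\B_{\eta_0}(\ox)$ satisfying $\|\tilde y_\ve-y_\ve\|\le\ve$, $\psi(\tilde y_\ve)\le\psi(y_\ve)$, and
\[
\psi(y)+\ve\|y-\tilde y_\ve\|\ge\psi(\tilde y_\ve)\qquad\text{for all }y\in\bar\B_{\eta_0}(\ox).
\]
Standard arguments (further shrinking radii, and exploiting that $\psi$ takes negative values only near $\ox$) guarantee $\tilde y_\ve$ lies in the interior.

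The key step is to extract a sharp Fr\'echet subgradient of $g$ at $\tilde y_\ve$ in spite of the nonsmooth concave term $-\tfrac{\al}{2}d^2(\cdot;S)$. Since $d(\cdot;S)$ is $1$-Lipschitz, the elementary bound
\[
d^2(y;S)\le d^2(\tilde y_\ve;S)+2d(\tilde y_\ve;S)\|y-\tilde y_\ve\|+\|y-\tilde y_\ve\|^2,
\]
inserted into the Ekeland inequality, shows that $\tilde y_\ve$ locally minimizes $g(y)+(\al d(\tilde y_\ve;S)+\ve)\|y-\tilde y_\ve\|+\tfrac{\al}{2}\|y-\tilde y_\ve\|^2$. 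Fermat's rule and the subdifferential sum rule for a Lipschitz perturbation then deliver some $v_\ve\in\Hat\partial g(\tilde y_\ve)\subset\partial g(\tilde y_\ve)$ with $\|v_\ve\|\le\al d(\tilde y_\ve;S)+\ve$. Metric subregularity forces
\[
d(\tilde y_\ve;S)\le\kk\|v_\ve\|\le\kk\al\cdot d(\tilde y_\ve;S)+\kk\ve,
\]
so $(1-\kk\al)d(\tilde y_\ve;S)\le\kk\ve$, and combined with $\psi(\tilde y_\ve)\ge-\tfrac{r+\al}{2}d^2(\tilde y_\ve;S)$ this yields $\psi(\tilde y_\ve)\ge-C\ve^2$. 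Letting $\ve\dn 0$ gives $\mu\ge 0$, contradicting the assumption $\mu<0$.

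\emph{The implication} (ii)$\Rightarrow$(i). Inequality \eqref{3.1} is immediate from \eqref{3.2}. For metric subregularity, pick $x$ near $\ox$ with $d(x;S)>0$ and any $x^*\in\partial f(x)$ near $\ox^*$, and set $v:=x^*-\ox^*\in\partial g(x)$. For arbitrary $\nu>0$, choose $u\in S$ with $\|u-x\|\le d(x;S)+\nu$. Combining \eqref{3.3}, the preliminary identity $g(u)=0$, and \eqref{3.2} yields
\[
\tfrac{\al}{2}d^2(x;S)\le g(x)=g(x)-g(u)\le\la v,x-u\ra+\tfrac{\bb}{2}d^2(x;S)\le\|v\|\,\|x-u\|+\tfrac{\bb}{2}d^2(x;S).
\]
Rearranging, letting $\nu\dn 0$, and dividing by $d(x;S)$ produces $\tfrac{\al-\bb}{2}d(x;S)\le\|v\|$. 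Taking the infimum over $x^*\in\partial f(x)$ establishes metric subregularity with modulus $\kk=2/(\al-\bb)$, and one may then pick any $r\in(0,(\al-\bb)/2)=(0,\kk^{-1})$ in \eqref{3.1}.

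The principal obstacle lies in the first implication: reconciling the Ekeland machinery with the concave nonsmooth term $-\tfrac{\al}{2}d^2(\cdot;S)$ while still extracting a Fr\'echet subgradient of $g$ with a tight norm bound. The quadratic Lipschitz expansion of $d^2$ sidesteps the need for any fuzzy sum rule in Asplund spaces, and this sharp estimate is precisely what delivers the full range $\al\in(0,\kk^{-1})$ promised in the statement, thereby resolving the conjecture of \cite{AG_new}.
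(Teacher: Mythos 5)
Your proof of the converse implication {\bf(ii)}$\Longrightarrow${\bf(i)} is essentially the paper's argument (tilt, use \eqref{3.3} twice, divide by $d(x;S)$), except that the final step ``taking the infimum over $x^*\in\partial f(x)$'' needs the two-case split used in the paper: the inequality $\frac{\al-\bb}{2}d(x;S)\le\|x^*-\ox^*\|$ is only available for subgradients $x^*$ close to $\ox^*$ (that is where \eqref{3.3} applies), and subgradients far from $\ox^*$ must be handled trivially via $d(x;S)\le\|x-\ox\|\le\eta/2\le\|x^*-\ox^*\|$. That is a minor repair. The genuine gap is in {\bf(i)}$\Longrightarrow${\bf(ii)}. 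You fix $\al\in(0,\kk^{-1})$, fix a ball $\B_{\eta_0}(\ox)$ on which \eqref{3.1} and subregularity hold, and try to prove $\psi:=g-\frac{\al}{2}d^2(\cdot;S)\ge 0$ on that \emph{whole} ball by minimizing $\psi$ there with Ekeland. The assertion that the Ekeland points are interior, justified only by the parenthetical claim that ``$\psi$ takes negative values only near $\ox$,'' is not merely unproved: it is false, and so is the intermediate statement your argument would establish. Example (in $\R$, $\ox=\ox^*=0$, $\kk=1$): $f=\infty$ on $(-\infty,0)$, $f(x)=x^2/2$ on $[0,0.8]$, $f(x)=0.64-x^2/2$ for $x\ge 0.8$. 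Then $S=(\partial f)^{-1}(0)=\{0\}$, every limiting subgradient at $x$ has modulus $|x|$, so $\partial f$ is metrically subregular with $\kk=1$ on $[-1,1]$, and $f\ge 0$ there, so \eqref{3.1} holds with, say, $r=1/2$. Yet with $\al=1/2\in(0,\kk^{-1})$ one has $f(1)=0.14<\frac{\al}{2}d^2(1;S)=0.25$, so $\inf_{\B_1(0)}\psi<0$ and the infimum is attained at the boundary point $x=1$; there Fermat's rule for the constrained problem picks up the normal cone to the ball and your norm bound $\|v_\ve\|\le\al\,d(\tilde y_\ve;S)+\ve$ disappears, so no contradiction arises. (The theorem itself is untouched: growth with $\al=1/2$ holds on the smaller ball of radius $\approx 0.92$.)

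This shows the flaw is structural rather than cosmetic: the contradiction must be set up, as in the paper, by negating the existence of \emph{any} radius $\eta$, so that the violating points $x_k$ converge to $\ox$, the function $f+\frac{r}{2}d^2(\cdot;S)-\la\ox^*,\cdot-\ox\ra$ is bounded below on $\B_\delta(\ox)$ by $f(\ox)$ thanks to \eqref{3.1} (this anchors the size of the Ekeland parameter $\ve_k$ to $d^2(x_k;S)$), and the Ekeland radius $\lm_k\dn 0$ keeps $\hat x_k$ in the interior automatically. Your fixed-ball minimization has neither the localization nor the anchor, and the counterexample shows it cannot have them. Two further small points: the exact sum rule does not produce a Fr\'echet subgradient $v_\ve\in\Hat\partial g(\tilde y_\ve)$; what is available (and suffices, since subregularity is stated for the limiting subdifferential) is $v_\ve\in\partial g(\tilde y_\ve)$ via \cite[Theorem~3.41]{M1}, the same tool the paper uses. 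Finally, note that in the paper even the correctly localized one-pass argument only yields the constant $\frac{1-3r\kk}{4\kk}$, and the full range $\al\in(0,\kk^{-1})$ is obtained by the iterative bootstrapping of $r$ and then of $\al$; any repair of your argument would need to either reproduce that iteration or explain why its single pass already reaches every $\al<\kk^{-1}$ after localization.
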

{\bf Proof.} Assume first that {\bf(i)} holds. The proof proceeds by contradiction. Namely, suppose that property (\ref{3.2}) does not hold for any real numbers $\alpha,\eta >0$. Our immediate goal now is to show that there is some real $\gg>0$ such that
\begin{eqnarray}\label{3.4}
f(x)\ge f(\ox)+\la \ox^*,x-\ox\ra+\frac{1-3r\kk}{4\kk}d^2\big(x;(\partial f)^{-1}(\ox^*)\big)\quad\textrm{for all }\;x\in \B_\gg(\ox).
\end{eqnarray}
If it is not the case, we can find a sequence $\{x_k\}$ converging to $\ox$ and satisfying
\begin{eqnarray*}
f(x_k)< f(\ox)+\la\ox^*,x_k-\ox\ra+\frac{1-3r\kk}{4\kk}d^2\big(x_k;(\partial f)^{-1}(\ox^*)\big)\quad\textrm{ for each index }\;k.
\end{eqnarray*}
This together with (\ref{3.1}) implies that all $x_k$ lie outside of $(\partial f)^{-1}(\ox^*)$. Consequently we have
\begin{eqnarray*}
\inf_{x\in \B_\delta(\ox)}\Big\{f(x)+\frac{r}{2}d^2\big(x;(\partial f)^{-1}(\ox^*)\big)&-&\la \ox^*,x-\ox\ra\Big\}\ge f(\bar{x})>
\\ &>&f(x_k)+\frac{r}{2}d^2\big(x_k;(\partial f)^{-1}(\ox^*)\big)-\la\ox^*,x_k-\ox\ra-\ve_k,
\end{eqnarray*}
where $\ve_k\dn 0$ are real numbers smaller than $\frac{1-r\kk}{4\kk}d^2\big(x_k;(\partial f)^{-1}(\ox^*)\big)$. Define further the real numbers $\lm_k:=\sqrt{\frac{\kk\ve_k}{1-r\kk}}$, which also converge to zero as $k\to\infty$. Applying Ekeland's variational principle (see, e.g.,  \cite[Theorem~2.26]{M1}) ensures the existence of a new sequence \{$\hat x_k$\} satisfying $\|\hat x_k-x_k\|\le\lm_k$ and such that for each $k\in\IN$ the point $\hat x_k$ is a minimizer of the problem
\[
{\rm minimize}\quad\Big\{f(x)+\frac{r}{2}d^2\big(x;(\partial f)^{-1}(\ox^*)\big)-\la \ox^*,x-\ox\ra+\frac{\ve_k}{\lm_k}\|x-\hat x_k\|\Big\}\quad \mbox{subject to}\quad x\in\B_\delta(\ox).
\]
Since $x_k\to\ox$ and $\lm_k\dn 0$, we may suppose that $\hat x_k\in\inte\B_\delta(\ox)$ for all $k\in\IN$. Then the Fermat stationary rule along with the subdifferential sum rule from \cite[Theorem~3.41]{M1} implies the inclusions
\begin{eqnarray*}
0&\in&\partial\Big(f+\frac{r}{2}d^2\big(\cdot;(\partial f)^{-1}(\ox^*)\big)-\la\ox^*,\cdot-\ox\ra+\frac{\ve_k}{\lm_k}\|\cdot-\hat x_k\|\Big)(\hat x_k)\\
&\subset&-\bar{x}^{*}+\frac{\ve_k}{\lm_k}\B^*+\frac{r}{2}\partial
\big[d^2\big(\cdot;(\partial f)^{-1}(\ox^*)\big)\big](\hat x_k)+\partial f(\hat x_k)\\
&\subset&-\ox^*+\partial f(\hat x_k)+\Big[rd\big(\hat x_k;(\partial f)^{-1}(\ox^*)\big)+\frac{\ve_k}{\lm_k}\Big]\B^*.
\end{eqnarray*}
Combining this with the metric subregularity of $\partial f$ at $(\ox,\bar{x}^{*})$ yields the estimates
\[d\big(\hat x_k;(\partial f)^{-1}(\ox^*)\big)\le\kk d\big(\ox^*;\partial f(\hat x_k)\big)\le\kk\Big[rd\big(\hat x_k;(\partial f)^{-1}(\ox^*)\big)+\frac{\ve_k}{\lm_k}\Big]
\]
for all $k\in\IN$ sufficiently large. Hence for such $k$ the inequality
\[
(1-r\kk)d\big(\hat x_k;(\partial f)^{-1}(\ox^*)\big)\le\kk\frac{\ve_k}{\lm_k}
\]
holds. This allows us to successively deduce that
\[\begin{array}{ll}
(1-r\kk)d\big(x_k;(\partial f)^{-1}(\ox^*)\big)&\disp\le(1-r\kk)\|\hat x_k-x_k\|+(1-r\kk)d\big(\hat x_k;(\partial f)^{-1}(\ox^*)\big)\\
&\disp\le (1-r\kk)\lm_k+\kk\frac{\ve_k}{\lm_k}=\sqrt{4(1-r\kk)\kk\ve_k}\\
&\disp<(1-r\kk)d\big(x_k;(\partial f)^{-1}(\ox^*)\big),
\end{array}
\]
where the last strict inequality follows from our choice of $\ve_k$. This is a clear contradiction, which shows that inequality (\ref{3.4}) holds with some $\gg>0$. Due to our standing assumption that there are no real numbers $\al,\eta>0$ satisfying (\ref{3.2}), we conclude that
$1-3r\kk\le 0$, i.e., $r\kk\ge\frac{1}{3}$.

Define now the real number $r_1:=\frac{3r\kk-1}{2\kk}\in[0,\kk^{-1})$ and observe that inequality (\ref{3.4}) can be transformed into (\ref{3.1}) with replacing $r$ by $r_1$ and $\delta$ by $\gg$, respectively. Consequently there is some real number $\gg_1>0$ such that
\begin{eqnarray*}
f(x)\ge f(\ox)+\la\ox^*,x-\ox\ra+\frac{1-3r_1\kk}{4\kk}d^2\big(x;(\partial f)^{-1}(\ox^*)\big)\quad\textrm{ for all }\;x\in\B_{\gg_1}(\ox).
\end{eqnarray*}
As before, we obtain the inequality $r_1\kk\ge\frac{1}{3}$, or equivalently $r\kappa\ge\frac{1}{3}+\frac{2}{3^2}$. Defining further the number $r_2:=\frac{3r_1-1}{2\kk}\in[0,\kk^{-1})$ and proceeding in the same way as above lead us to the inequality $r\kk\ge\frac{1}{3}+\frac{2}{3^2}+\frac{4}{3^3}$. Then we get by induction the progressively stronger bounds
\[
r\kk\ge \frac{1}{3}+\frac{2}{3^2}+\ldots+\frac{2^k}{3^{k+1}}=1-\frac{2^{k+1}}{3^{k+1}}\quad\mbox{for all}\quad k\in\IN.
\]
Letting $k\to\infty$ yields $r\kk\ge 1$, which is a contradiction. Therefore there exist real numbers $\al,\delta>0$ such that inequality \eqref{3.2} is satisfied.

To justify {\bf (ii)}, it remains to show that $\al$ may be chosen smaller than but arbitrarily close to $\kk^{-1}$. To this end, observe that if $\al\ge \kk^{-1}$, then the claim is obvious, since it is possible to replace $\al$ by any $\al_0\in[0,\kk^{-1})$ without altering the validity of \eqref{3.2}. Hence we may suppose that $\al<\kk^{-1}$. Our immediate goal is to verify that there is some $\mu>0$ such that
\begin{eqnarray}\label{3.7}
f(x)\ge f(\ox)+\la \ox^*,x-\ox\ra+\frac{1+\al\kk}{4\kk}d^2\big(x;(\partial f)^{-1}(\ox^*)\big)\quad\textrm{ for all }\;x\in\B_\mu(\ox).
\end{eqnarray}
The process of proving this is very similar to the proof of \eqref{3.4}; hence we only sketch it. Indeed, arguing by contradiction allows us find a sequence $u_k\to\ox$ satisfying
\begin{eqnarray*}
f(u_k)<f(\ox)+\la\ox^*,u_k-\ox\ra+\frac{1+\al\kk}{4\kk}d^2\big(u_k;(\partial f)^{-1}(\ox^*)\big)\quad\textrm{ for each index }\;k.
\end{eqnarray*}
Thanks to inequality \eqref{3.2} proved above we have
\begin{eqnarray*}
\inf_{x\in\B_{\eta}(\ox)}\Big\{f(x)-\frac{\al}{2}d^2\big(x;(\partial f)^{-1}(\ox^*)\big)&-&\la\ox^*,x-\ox\ra\Big\}\ge f(\bar{x})>
\\&>&f(u_k)-\frac{\al}{2}d^2\big(u_k;(\partial f)^{-1}(\ox^*)\big)-\la\ox^*,u_k-\ox\ra-\nu_k
\end{eqnarray*}
for some $\nu_k>0$ slightly smaller than $\frac{1-\al\kk}{4\kk}d^2(u_k;(\partial f)^{-1}(\ox^*))$ and converging to zero. Define the real numbers $\rho_k:=\sqrt{\frac{\kk\nu_k}{1-\al\kk}}$, which clearly converge to zero as $k\to\infty$. By Ekeland's variational principle we find a new sequence \{$\hat u_k$\} satisfying $\|\hat u_k-u_k\|\le\rho_k$ and such that for each $k\in\IN$ the point $\hat u_k$ is a minimizer of the problem
\[
{\rm minimize}\quad\Big\{f(x)-\frac{\al}{2}d^2\big(x;(\partial f)^{-1}(\ox^*)\big)-\la\ox^*,x-\ox\ra+\frac{\nu_k}{\rho_k}\|x-\hat u_k\|\Big\}\quad \mbox{subject to}\quad x\in\B_\eta(\ox).
\]
Since $\hat u_k\to\ox$, suppose without loss of generality that $\hat u_k\in\inte\B_\eta(\ox)$ for all $k$. Applying again standard calculus rules leads us to the inclusions
\begin{eqnarray*}
0&\in&\partial\Big(f-\frac{\al}{2}d^2\big(\cdot;(\partial f)^{-1}(\ox^*)\big)-\la\ox^*,\cdot-\ox\ra+\frac{\nu_k}{\rho_k}\|\cdot-\hat u_k\|\Big)(\hat u_k)\\
&\subset&-\ox^*+ \Big[\al d\big(\hat u_k;(\partial f)^{-1}(\ox^*)\big)+\frac{\nu_k}{\rho_k}\Big]\B^*+\partial f(\hat u_k).
\end{eqnarray*}
This together with the metric subregularity of $\partial f$ at $(\ox,\ox^*)$ yields the inequality
\[
d\big(\hat u_k;(\partial f)^{-1}(\ox^*)\big)\le\kk d(\ox^*;\partial f(\hat u_k))\le\kk\Big[\al d\big(\hat u_k;(\partial f)^{-1}(\ox^*)\big)+\frac{\nu_k}{\rho_k}\Big]
\]
for all large $k$. In this way we successively deduce that
\[\begin{array}{ll}
(1-\al\kk)d\big(u_k;(\partial f)^{-1}(\ox^*)\big)&\disp\le(1-\al\kk)\|\hat u_k-u_k\|+ (1-\al\kk)d\big(\hat u_k;(\partial f)^{-1}(\ox^*)\big)\\
&\disp\le (1-\al\kk)\rho_k+\kk\frac{\nu_k}{\rho_k}= \sqrt{4(1-\al\kk)\kk\nu_k}\\
&\disp<(1-\al\kk)d\big(u_k;(\partial f)^{-1}(\ox^*)\big),
\end{array}
\]
which is a contradiction. Hence there exists $\mu>0$ such that \eqref{3.7} holds. By induction we may then construct from \eqref{3.7} two strictly positive sequences $\{\al_k\}$ and $\{\mu_k\}$ satisfying
\begin{eqnarray*}
f(x)\ge f(\ox)+\la \ox^*,x-\ox\ra+\frac{\al_k}{2}d^2\big(x;(\partial f)^{-1}(\ox^*)\big)\quad\mbox{for all}\quad x\in\B_{\mu_k}(\ox)
\end{eqnarray*}
with $\al_k:=\frac{1+\al_{k-1}\kk}{2\kk}\in(0,\kk^{-1})$ for $k\in\IN$ and $\al_0:=\al$. Consequently it gives $\al_k\to\kk^{-1}$ as $k\to\infty$ and thus completes proof of the first part of the theorem.

To justify the converse implication, assume that {\bf (ii)} holds for some $\al,\eta>0$ and pick a real number $\beta\in[0,\al)$ such that \eqref{3.3} is satisfied. We only need to prove the metric subregularity of $\partial f$ at $(\ox.\ox^*)$ since inequality \eqref{3.1} is immediate from \eqref{3.2}.  To proceed, fix any point $x\in\B_{\frac{\eta}{2}}(\ox)$ and get therefore that $d\big(x;(\partial f)^{-1}(\ox^*)\big)\le\|x-\ox\|\le \frac{\eta}{2}$. If the inequality $d\big(\ox^*;\partial f(x)\big)\ge\frac{\eta}{2}$ is valid, it clearly implies that $d\big(x;(\partial f)^{-1}(\ox^*)\big)\le d\big(\ox^*;\partial f(x))$. Hence we may suppose that $d\big(\ox^*;\partial f(x)\big)<\frac{\eta}{2}$. Taking any subgradient $x^*\in\partial f(x)\cap\B_{\frac{\eta}{2}}(\ox^*)$ yields $(x,x^*)\in\gph\partial f\cap\B_\eta(\ox,\ox^*)$. Note that for each point $u\in(\partial f)^{-1}(\ox^*)$ with $\|x-u\|\le\|x-\ox\|\le\frac{\eta}{2}$ we have $\|u-\ox\|\le\eta$. Then it follows from \eqref{3.3} that for such points $u$ the estimate
\[
f(\ox)\ge f(u)+\la\ox^*,\ox-u\ra-\frac{\beta}{2}d^2\big(u;(\partial f)^{-1}(\ox^*)\big)=f(u)+\la \ox^*,\ox-u\ra
\]
holds. Combining this with \eqref{3.2} and \eqref{3.3} gives us that
\[\begin{array}{ll}
\|x^*-\ox^*\|\cdot\|x-u\|&\disp\ge\la x^*-\ox^*,x-u\ra=\la x^*,x-u\ra+\la\ox^*,u-\ox\ra-\la\ox^*,x-\ox\ra\\
&\disp\ge f(x)-f(u)-\frac{\beta}{2}d^2\big(x;(\partial f)^{-1}(\ox^*)\big)+f(u)-f(\ox)-\la\ox^*,x-\ox\ra\\
&\disp=f(x)-f(\ox)-\la\ox^*,x-\ox\ra-\frac{\beta}{2}d^2\big(x;(\partial f)^{-1}(\ox^*)\big)\\
&\disp\ge\frac{\al-\beta}{2}d^2\big(x;(\partial f)^{-1}(\ox^*)\big).
\end{array}
\]
Since the latter inequality holds for all $u\in(\partial f)^{-1}(\ox^*)$ with $\|x-u\|\le\|x-\ox\|$, we get
\[
\|x^*-\ox^*\|\cdot d\big(x;(\partial f)^{-1}(\ox^*)\big)\ge\frac{\al-\beta}{2}d^2\big(x;(\partial f)^{-1}(\ox^*)\big),
\]
which implies in turn that
\[
\|x^*-\ox^*\|\ge\frac{\al-\beta}{2}d\big(x;(\partial f)^{-1}(\ox^*)\big)\quad\mbox{whenever}\quad x^*\in\partial f(x)\cap\B_{\frac{\eta}{2}}(\ox^*).
\]
By $d\big(\ox^*;\partial f(x)\big)\le\frac{\eta}{2}$ the latter gives us that $d\big(\ox^*;\partial f(x)\big)\ge\frac{\al-\beta}{2}d\big(x;(\partial f)^{-1}(\ox^*)\big)$, which justifies the metric subregularity of $\partial f$ at $(\ox,\ox^*)$ and thus completes the proof of the theorem. \endproof

Specializing Theorem~\ref{thm1} to the case when $\ox$ is a {\em local minimizer} of $f$ is straightforward. Note that in this case we always have $0\in\partial f(\ox)$ by the (generalized) Fermat stationary rule.

\begin{Corollary}{\bf (metric subregularity of the subdifferential at local minimizers).} Let $f\colon X\to\oR$, and let $\ox\in\dom f$ be a local minimizer of $f$. Consider the following two statements:

{\bf (i)} The subdifferential $\partial f$ is metrically subregular at $(\ox,0)$ with modulus $\kk>0$.

{\bf(ii)} There exist real numbers $\al,\eta>0$ such that
\begin{eqnarray*}
f(x)\ge f(\ox)+\frac{\al}{2}d^2\big(x;(\partial f)^{-1}(0)\big)\quad\mbox{for all}\;\;x\in \B_\eta(\ox).
\end{eqnarray*}
Then implication $[{\rm\bf(i)}\Longrightarrow{\rm\bf(ii)}]$ holds, where $\al$ may be chosen arbitrarily in $(0,\kk^{-1})$. Furthermore, the converse implication $[{\rm\bf(ii)}\Longrightarrow{\rm\bf(i)}]$ also holds if in addition there is some $\beta\in[0,\al)$ such that
$$
f(u)\ge f(x)+\la x^*,u-x\ra-\frac{\beta}{2}d^2\big(x;(\partial f)^{-1}(0)\big)\quad\mbox{for all}\quad(u,0),(x,x^*)\in\gph\partial f\cap \B_\eta(\ox,\ox^*).
$$
\end{Corollary}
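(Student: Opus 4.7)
The plan is to derive both implications as immediate specializations of Theorem~\ref{thm1} with $\ox^*:=0$, noting that when $\ox$ is a genuine local minimizer of $f$ the two hypothesis conditions (\ref{3.1}) and (\ref{3.3}) of Theorem~\ref{thm1} take especially simple forms. Note that $0\in\partial f(\ox)$ by the generalized Fermat rule, so $(\ox,0)\in\gph\partial f$ and the conclusions of Theorem~\ref{thm1} may legitimately be invoked at this pair.

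For the forward implication $[{\rm\bf(i)}\Longrightarrow{\rm\bf(ii)}]$, the task reduces to verifying condition~(\ref{3.1}). Since $\ox$ is a local minimizer, there is a neighborhood $\B_\dd(\ox)$ on which $f(x)\ge f(\ox)$, and with $\ox^*=0$ this trivially rearranges as
\[
f(x)\ge f(\ox)+\la 0,x-\ox\ra-\frac{r}{2}d^2\big(x;(\partial f)^{-1}(0)\big)\quad\textrm{for all}\quad x\in\B_\dd(\ox),
\]
for \emph{any} $r>0$; in particular we may fix some $r\in(0,\kk^{-1})$. Theorem~\ref{thm1} then delivers the desired quadratic growth estimate (ii), with $\al$ allowed to be chosen arbitrarily in $(0,\kk^{-1})$.

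For the converse $[{\rm\bf(ii)}\Longrightarrow{\rm\bf(i)}]$, the additional hypothesis of the corollary involving $\bb\in[0,\al)$ is precisely condition~(\ref{3.3}) of Theorem~\ref{thm1} with $\ox^*=0$. Hence the converse direction of Theorem~\ref{thm1} applies verbatim and yields the metric subregularity of $\partial f$ at $(\ox,0)$ with modulus $\kk$. No substantive obstacle arises: the corollary is fully subsumed by Theorem~\ref{thm1}, as the ``near local minimizer'' condition (\ref{3.1}) is strictly weaker than, and hence automatic under, genuine local minimality in the case $\ox^*=0$.
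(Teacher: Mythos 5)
Your proposal is correct and is exactly the route the paper intends: the corollary is stated as a straightforward specialization of Theorem~\ref{thm1} with $\ox^*=0$, where local minimality makes condition \eqref{3.1} automatic for any $r\in(0,\kk^{-1})$ and the extra $\beta$-condition is \eqref{3.3} verbatim. The only cosmetic caveat is that the converse direction yields metric subregularity with \emph{some} modulus (the proof of Theorem~\ref{thm1} produces $2/(\al-\beta)$), which matches how statement (i) of the corollary is phrased.
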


It is worth mentioning that when $f$ is a convex function, inequalities \eqref{3.1} and \eqref{3.3} hold automatically. In this case, the equivalence between (i) and (ii) in the above result has been originally established by Arag\'on and Geoffroy \cite[Theorem~3.3]{AG} in Hilbert spaces and has been recently extended to Banach spaces in their new paper \cite[Theorem~2.1]{AG_new} with the weaker estimate $\al<1/2k$ in \eqref{3.2} and with the proof different from ours in Theorem~\ref{thm1}; cf. also Remark~2.2 in \cite{AG_new}.

Furthermore, it is easy to observe the following consequence of Theorem~\ref{thm1} concerning {\em strong metric subregularity} of the subdifferential and to compare it with \cite[Theorem~3.1]{AG_new} similarly to the discussion above on Theorem~\ref{thm1} and \cite[Theorem~2.1]{AG_new}.

\begin{Corollary} {\bf (strong metric subregularity of the subdifferential.)}\label{coro0} Given $f\colon X\to\oR$ and $(\ox,\ox^{*})\in\gph\partial f$, consider the following two statements:

{\bf (i)} The subdifferential $\partial f$ is strongly metrically subregular at $(\ox,\ox^*)$ with modulus $\kk>0$ and there are real numbers $r\in(0,\kk^{-1})$ and $\delta>0$ such that
\begin{eqnarray}\label{3.8}
f(x)\ge f(\ox)+\la \ox^*,x-\ox\ra-\frac{r}{2}\|x-\ox\|^2\quad\mbox{for all}\quad x\in\B_\delta(\ox).
\end{eqnarray}

{\bf(ii)} There are real numbers $\al,\eta>0$ such that
\begin{eqnarray}\label{3.9}
f(x)\ge f(\ox)+\la \ox^*,x-\ox\ra+\frac{\al}{2}\|x-\ox\|^2\quad\mbox{for all}\quad x\in\B_\eta(\ox).
\end{eqnarray}
Then implication $[{\rm\bf(i)}\Longrightarrow{\rm\bf(ii)}]$ holds, where $\al$ may be chosen arbitrarily in $(0,\kk^{-1})$. Furthermore, the converse implication $[{\rm\bf(ii)}\Longrightarrow{\rm\bf(i)}]$ also holds if in addition there is some number $\beta\in[0,\al)$ with
\begin{eqnarray}\label{3.10}
f(\ox)\ge f(x)+\la x^*,\ox-x\ra-\frac{\beta}{2}\|x-\ox\|^2\quad\mbox{whenever}\quad(x,x^*)\in\gph\partial f\cap\B_\eta(\ox,\ox^*).
\end{eqnarray}
\end{Corollary}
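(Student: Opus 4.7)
The plan is to reduce the corollary directly to Theorem~\ref{thm1} by exploiting the equivalence between strong metric subregularity of $\partial f$ at $(\ox,\ox^*)$ and the combination of ordinary metric subregularity with the local singleton property $(\partial f)^{-1}(\ox^*)\cap U=\{\ox\}$ on some neighborhood $U$ of $\ox$. Whenever the singleton identity holds on a ball $\B_\rho(\ox)$, one has $d(x;(\partial f)^{-1}(\ox^*))=\|x-\ox\|$ for every $x\in\B_{\rho/2}(\ox)$, since any other element of $(\partial f)^{-1}(\ox^*)$ must lie outside $\B_\rho(\ox)$ and is therefore farther from $x$ than $\ox$ is. Under this identification, inequalities \eqref{3.1}, \eqref{3.2}, and \eqref{3.3} of Theorem~\ref{thm1} collapse, respectively, to \eqref{3.8}, \eqref{3.9}, and \eqref{3.10}.

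For $[{\bf (i)}\Rightarrow{\bf (ii)}]$, I would extract from the strong subregularity a radius $\rho>0$ producing both the singleton property on $\B_\rho(\ox)$ and the metric subregularity estimate with modulus $\kk$. Shrinking $\delta$ to at most $\rho/2$ makes the distance identity above hold on $\B_\delta(\ox)$, so \eqref{3.8} reads precisely as \eqref{3.1}. An appeal to Theorem~\ref{thm1} then delivers \eqref{3.2} on some $\B_\eta(\ox)$ with $\al$ arbitrary in $(0,\kk^{-1})$; a final shrinkage of $\eta$ to at most $\rho/2$ converts \eqref{3.2} into the desired \eqref{3.9}.

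For the converse $[{\bf (ii)}\Rightarrow{\bf (i)}]$, the first step is to establish the local singleton property directly from \eqref{3.9} and \eqref{3.10}. Given any $u\in(\partial f)^{-1}(\ox^*)\cap\B_\eta(\ox)$, I would substitute $x=u$ into \eqref{3.9} and $(x,x^*)=(u,\ox^*)$ into \eqref{3.10}; adding the two resulting inequalities cancels all the linear terms and forces $\tfrac{\al-\beta}{2}\|u-\ox\|^2\le 0$, whence $u=\ox$. Once the singleton identity is secured, the identification $d(\cdot;(\partial f)^{-1}(\ox^*))=\|\cdot-\ox\|$ converts \eqref{3.9} into \eqref{3.2} and \eqref{3.10} into \eqref{3.3} on a slightly smaller ball. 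Theorem~\ref{thm1} then yields metric subregularity of $\partial f$ at $(\ox,\ox^*)$, and combining this with the singleton property gives strong metric subregularity; inequality \eqref{3.8} is immediate from \eqref{3.9}.

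I do not anticipate any serious obstacle: the entire argument is a translation of Theorem~\ref{thm1} under the identity $d(x;(\partial f)^{-1}(\ox^*))=\|x-\ox\|$ induced by the singleton property. The only genuinely new ingredient is the short ``adding-two-inequalities'' argument upgrading \eqref{3.9} and \eqref{3.10} to the local singleton, and the only technical care required is the bookkeeping of neighborhood radii so that Theorem~\ref{thm1}'s hypotheses are verified on a ball on which the distance identity holds.
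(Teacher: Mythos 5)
Your proposal is correct and follows essentially the same route as the paper: the forward direction reduces to Theorem~\ref{thm1} via the identity $d\big(x;(\partial f)^{-1}(\ox^*)\big)=\|x-\ox\|$ forced by strong subregularity, and the converse first extracts the local singleton $(\partial f)^{-1}(\ox^*)\cap\B_\eta(\ox)=\{\ox\}$ by combining \eqref{3.9} and \eqref{3.10} at a point $u\in(\partial f)^{-1}(\ox^*)$ and then invokes Theorem~\ref{thm1}. Your extra bookkeeping of radii merely spells out what the paper leaves implicit.
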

{\bf Proof.} Suppose first that {\bf(i)} holds. By Theorem~\ref{thm1} it suffices to show that the equality $d\big(x;(\partial f)^{-1}(\ox^*)\big)=\|x-\ox\|$ is satisfied for all $x$ around $\ox$. But this follows immediately from the assumed strong metric subregularity of $\partial f$  at $(\ox,\ox^*)$. Hence property {\bf(ii)} is verified.

To justify the converse implication, suppose that both inequalities \eqref{3.9} and \eqref{3.10} are satisfied for some constants $\al>\beta>0$ and $\eta>0$.  We now show that $(\partial f)^{-1}(\ox^*)\cap\B_\eta(\ox)=\{\ox\}$.  To proceed, pick any point $u\in(\partial f)^{-1}(\ox^*)\cap \B_\eta(\ox)$ and derive from relationships \eqref{3.9} and \eqref{3.10} that
\[
\la\ox^*,u-\ox\ra+\frac{\beta}{2}\|u-\ox\|^2\ge f(u)-f(\ox)\ge\la\ox^*,u-\ox\ra+\frac{\al}{2}\|u-\ox\|^2.
\]
Combining this with the assumed inequality $\beta<\al$ ensures that $u=\ox$. Then applying Theorem~\ref{thm1} completes the proof of this corollary.\endproof

As the following example illustrates, implication [{\bf(ii)}$\Longrightarrow${\bf(i)}] in Corollary~\ref{coro0} may easily fail if condition \eqref{3.10} is not satisfied.

\begin{Example}{\bf (second-order growth may not imply strong subregularity).}
{\rm Consider the l.s.c.\ function $f:\R\to\oR$ defined by
\[
f(x):=\left\{\begin{array}{ll} \frac{1}{2}x-x^2\sin\Big(\frac{1}{x}\Big)\quad&\mbox{if}\quad x>0, \\
0\quad&\mbox{if}\quad x=0,\\
\infty&\mbox{if}\quad x<0.
\end{array}\right.
\]
It is clear that $f$ satisfies inequality \eqref{3.9} with $\ox=0$, $\ox^*=0$, and $\al=1$. On the other hand, $\ox^*$ is not an isolated point of $\partial f^{-1}(\ox)$. Consequently assertion (i) is violated. The reason is of course that condition \eqref{3.10} does not hold for this function.}
\end{Example}

When $\ox$ is a {\em local minimizer} of $f$, we immediately deduce the following consequence of Corollary~\ref{coro0} in terms of the classical quadratic growth condition; see, e.g., \cite{BS}.

\begin{Corollary}{\bf (strong metric subregularity of the subdifferential at minimizers).} Let $f\colon X\to\R$, and let $\ox\in\dom f$ be a local minimizer of $f$. Consider the following two statements:

{\bf (i)} The subdifferential $\partial f$ is strongly metrically subregular at $(\ox,0)$ with modulus $\kk>0$.

{\bf(ii)} There are real numbers $\al,\eta>0$ such that
\begin{eqnarray*}
f(x)\ge f(\ox)+\frac{\al}{2}\|x-\ox\|^2\quad\mbox{for all}\quad x\in\B_\eta(\ox).
\end{eqnarray*}
Then implication $[{\rm\bf(i)}\Longrightarrow{\rm\bf(ii)}]$ holds, where $\al$ may be chosen arbitrarily in $(0,\kk^{-1})$. Furthermore, the converse implication $[{\rm\bf(ii)}\Longrightarrow{\rm\bf(i)}]$ also holds if in addition there is some $\beta\in[0,\al)$ with
$$
f(\ox)\ge f(x)+\la x^*,\ox-x\ra-\frac{\beta}{2}\|x-\ox\|^2\quad\mbox{whenever}\quad(x,x^*)\in\gph\partial f\cap\B_\eta(\ox,\ox^*).$$
\end{Corollary}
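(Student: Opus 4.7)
The plan is to derive this corollary directly from Corollary~\ref{coro0} by specializing to $\ox^{*}=0$. Since $\ox$ is a local minimizer of $f$, the (generalized) Fermat stationary rule yields $0\in\partial f(\ox)$, so $(\ox,0)\in\gph\partial f$ and the machinery of Corollary~\ref{coro0} applies with $\ox^{*}=0$. The main observation that makes both implications essentially free is that, once $\ox^{*}=0$, the linear term $\la\ox^{*},x-\ox\ra$ in each displayed inequality vanishes and the hypothesis that $\ox$ is a local minimizer trivializes inequality \eqref{3.8}.

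For the implication $[{\bf(i)}\Longrightarrow{\bf(ii)}]$, I would first note that from local minimality we have $f(x)\ge f(\ox)$ on some ball $\B_{\delta}(\ox)$, hence for \emph{every} $r>0$ the inequality
\[
f(x)\ge f(\ox)+\la 0,x-\ox\ra-\tfrac{r}{2}\|x-\ox\|^{2}\quad\mbox{for all}\quad x\in\B_{\delta}(\ox)
\]
holds automatically. In particular one may pick $r\in(0,\kk^{-1})$, so the hypothesis \eqref{3.8} of Corollary~\ref{coro0} is satisfied at $(\ox,0)$. Invoking that corollary then yields the existence of $\al,\eta>0$ with the prescribed quadratic growth, and the statement about $\al$ being arbitrary in $(0,\kk^{-1})$ is inherited verbatim from Corollary~\ref{coro0}.

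For the converse $[{\bf(ii)}\Longrightarrow{\bf(i)}]$, I would substitute $\ox^{*}=0$ into \eqref{3.9} to recover exactly the assumed quadratic growth, and substitute $\ox^{*}=0$ into \eqref{3.10} to recover exactly the extra uniformity condition stated in the corollary. Thus both hypotheses of Corollary~\ref{coro0} are in force, and strong metric subregularity of $\partial f$ at $(\ox,0)$ follows at once. There is no real obstacle here—the only thing to double-check is that the neighborhood structure from Corollary~\ref{coro0} (which is a ball around $(\ox,\ox^{*})=(\ox,0)$ in $X\times X^{*}$) is consistent with the statement as given, but this is automatic since the two neighborhoods coincide when $\ox^{*}=0$.
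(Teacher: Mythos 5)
Your proposal is correct and follows exactly the paper's (implicit) argument: the paper states this corollary as an immediate specialization of Corollary~\ref{coro0} to $\ox^*=0$, and your observation that local minimality makes inequality \eqref{3.8} hold automatically for any $r\in(0,\kk^{-1})$ is precisely the detail that makes the deduction immediate. Nothing further is needed.
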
\vspace*{0.05in}

It is tempting now to adapt the proof technique of Theorem~\ref{thm1} in order to relate quadratic growth properties of a function $f$ to {\em metric regularity} of the subdifferential $\partial f$. To do so, it seems necessary to consider a {\em uniform version} of equation (\ref{3.1}). However, it turns out to be problematic from the following two perspectives. Firstly, the case when a point $\bar{x}$ is only a local minimizer seems to be out of reach and, secondly, this seemingly weak condition when combined with metric regularity of $\partial f$ already implies that the subdifferential mapping is {\em strongly} metrically regular; cf.\ \cite{MN} and the discussion below.

\begin{Lemma}{\bf (growth conditions under metric regularity).}\label{lem:equiv} Given $f\colon X\to\oR$, fix a pair $(\ox,\ox^*)\in\gph\partial f$ and suppose that the subdifferential $\partial f$ is metrically regular around $(\ox,\ox^*)$ with modulus $\kk>0$. Then the following assertions are equivalent:

{\bf (i)} There exist a real number $r\in[0,\kk^{-1})$ and neighborhoods $U$ of $\ox$ and $U^*$ of $\ox^*$ such that for any pair $(u,u^*)\in\gph \partial f\cap(U\times U^*)$ we have
\begin{eqnarray}\label{3.11}
f(x)\ge f(u)+\la u^*,x-u\ra-\frac{r}{2}d^2\big(x;(\partial f)^{-1}(u^*)\big)\quad \mbox{whenever}\quad x\in U.
\end{eqnarray}

{\bf (ii)} There are neighborhoods $U_0$ of $\ox$ and $U^*_0$ of $\ox^*$ such that
\begin{eqnarray}\label{3.13}
f(x)\ge f(u)+\la u^*,x-u\ra\quad\quad\mbox{for all}\quad x\in U_0\;\textrm{ and }\;(u,u^*)\in\gph\partial f\cap(U_0\times U^*_0).
\end{eqnarray}
\end{Lemma}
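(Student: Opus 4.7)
The implication $[{\bf(ii)}\Longrightarrow{\bf(i)}]$ is immediate, since {\bf(ii)} is nothing but {\bf(i)} with $r=0$ on the same neighborhoods.

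For the converse direction I plan to adapt the Ekeland-based technique of Theorem~\ref{thm1} and execute it uniformly in $(u,u^*)$ near $(\ox,\ox^*)$. The crucial input is that metric regularity of $\partial f$ around $(\ox,\ox^*)$ with modulus $\kk$ automatically yields metric subregularity at every $(u,u^*)\in\gph\partial f$ in a sufficiently small neighborhood of $(\ox,\ox^*)$, all with the same modulus $\kk$ on a common neighborhood. The heart of the argument is the following \emph{bootstrapping claim}: if {\bf(i)} holds with modulus $r$, then {\bf(i)} also holds with any modulus $r'\ge(3\kk r-1)/(2\kk)$ on possibly smaller neighborhoods.

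To establish this claim I would argue by contradiction. Suppose {\bf(i)} with modulus $r'$ fails and extract sequences $x_k\to\ox$ and $(u_k,u_k^*)\in\gph\partial f$ with $(u_k,u_k^*)\to(\ox,\ox^*)$ violating the desired inequality by some $\delta_k>0$; set $d_k:=d(x_k;(\partial f)^{-1}(u_k^*))$. By {\bf(i)} the auxiliary functional
\[
\psi_k(x):=f(x)-\la u_k^*,x-u_k\ra+\frac{r}{2}d^2\bigl(x;(\partial f)^{-1}(u_k^*)\bigr)
\]
attains its minimum value $f(u_k)$ at $u_k$, while $\psi_k(x_k)-f(u_k)=\tfrac{r-r'}{2}d_k^2-\delta_k=:\ve_k>0$. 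Ekeland's variational principle supplies a point $\hat x_k$ with $\|\hat x_k-x_k\|\le\lm_k$ that minimizes $\psi_k+(\ve_k/\lm_k)\|\cdot-\hat x_k\|$ on the current neighborhood. The Fermat rule, the sum rule from \cite[Theorem~3.41]{M1}, and the standard bound $\partial[d^2(\cdot;S)]\subseteq 2d(\cdot;S)\B^*$ yield
\[
d\bigl(u_k^*;\partial f(\hat x_k)\bigr)\le r\,d\bigl(\hat x_k;(\partial f)^{-1}(u_k^*)\bigr)+\ve_k/\lm_k.
\]
Uniform metric subregularity then provides $d(\hat x_k;(\partial f)^{-1}(u_k^*))\le\kk\ve_k/((1-\kk r)\lm_k)$. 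Invoking $d_k\le\lm_k+d(\hat x_k;(\partial f)^{-1}(u_k^*))$ and optimizing in $\lm_k$ collapses everything to
\[
\bigl(1-3\kk r+2\kk r'\bigr)d_k^2\le -4\kk\delta_k<0,
\]
which is impossible whenever $r'\ge(3\kk r-1)/(2\kk)$, establishing the claim.

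Granting the claim, I would iterate. If $\kk r\le 1/3$ the claim with $r'=0$ already delivers {\bf(ii)}. Otherwise set $r^{(0)}:=r$ and $r^{(n+1)}:=(3\kk r^{(n)}-1)/(2\kk)$; this recurrence rewrites as $r^{(n)}-\kk^{-1}=(3/2)^n(r-\kk^{-1})\to-\infty$, so after finitely many applications of the claim one reaches $r^{(n)}\le 1/(3\kk)$, and one further application with $r'=0$ yields {\bf(ii)} on the resulting (still nonempty open) neighborhoods. The main technical obstacle I anticipate is controlling the successive shrinkage of the neighborhoods and ensuring the Ekeland minimizers $\hat x_k$ lie in the interior so that the sum rule applies; this is automatic because the optimal parameter $\lm_k=\sqrt{\kk\ve_k/(1-\kk r)}$ tends to zero along the extracted sequences and only finitely many shrinkages arise.
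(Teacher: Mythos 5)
Your proposal is correct and takes essentially the same route as the paper's own proof: Ekeland's variational principle plus the subdifferential sum rule and the metric regularity estimate produce exactly the paper's modulus-improvement step $r\mapsto\frac{3\kk r-1}{2\kk}$, which is then iterated. The only difference is organizational—the paper wraps the iteration in an outer contradiction yielding $r\kk\ge 1-(2/3)^{k+1}$ for all $k$, whereas you prove the bootstrapping claim unconditionally and iterate it directly finitely many times down to $r'=0$—so the analytic content is identical.
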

\begin{proof} Implication [{\bf(ii)}$\Longrightarrow${\bf(i)}] holds trivially. Hence we focus on [{\bf(i)}$\Longrightarrow${\bf(ii)}], which is rather similar to the one of Theorem~\ref{thm1}. Arguing by contradiction, suppose that assertion {\bf(ii)} does not hold and then claim that there are neighborhoods $U_1$ of $\ox$ and $U^*_1$ of $\ox^*$ such that for any pairs $(u,u^*)\in\gph\partial f\cap(U_1\times U^*_1)$ we have the inequality
\begin{eqnarray}\label{3.14}
f(x)\ge f(u)+\la u^*,x-u\ra+\frac{1-3r\kk}{4\kk}d^2\big(x;(\partial f)^{-1}(u^*)\big)\quad\textrm{whenever}\quad x\in U_1.
\end{eqnarray}
Assuming on the contrary that this condition is not satisfied, we find sequences $\{x_k\}$ converging to $\ox$ and $\{(u_k,u^*_k)\}\subset\gph\partial f$ converging to $(\ox,\ox^*)$ such that
\begin{eqnarray*}
f(x_k)< f(u_k)+\la u^*_k,x_k-u_k\ra+\frac{1-3r\kk}{4\kk}d^2\big(x_k;(\partial f)^{-1}(u^*_k)\big)\quad\textrm{ for each index }\;k.
\end{eqnarray*}
This together with (\ref{3.11}) implies that $x_k\notin(\partial f)^{-1}(u_k^{*})$ and consequently the inequalities
\begin{eqnarray*}
\inf_{x\in\B_\delta(\ox)}\Big\{f(x)+\frac{r}{2}d^2\big(x;(\partial f)^{-1}(u_k^*)\big)&-&\la u_k^*,x-u_k\ra\Big\}\ge f(u_k)>\\&>& f(x_k)+\frac{r}{2}d^2\big(x_k;(\partial f)^{-1}(u_k^*)\big)-\la u_k^*,x_k-u_k\ra-\ve_k
\end{eqnarray*}
hold, where $\ve_k>0$ are slightly smaller than $\frac{1-r\kappa}{4\kappa}d^2(x_k;(\partial f)^{-1}(u_k^{*}))$. Without loss of generality assume that $\ve_k$ converge to zero. Define also the real numbers $\lm_k:=\sqrt{\frac{\kk\ve_k}{1-r\kk}}$, which clearly also converge to zero.
Employing Ekeland's variational principle, we deduce the existence of a new sequence \{$\hat x_k$\} satisfying $\|\hat x_k-x_k\|\le\lm_k$ and such that each $\hat x_k$ is a minimizer of the problem
\[
{\rm minimize}\quad \Big\{f(x)+\frac{r}{2}d^2(x;(\partial f)^{-1}(u^*_k))-\la u^*_k,x-u_k\ra+\frac{\ve_k}{\lm_k}\|x-\hat x_k\|\Big\}\quad \mbox{subject to}\quad x\in\B_\delta(\ox).
\]
Since $x_k\to\ox$ and $\hat x_k\to\ox$, we may suppose $\hat x_k\in{\rm int}\,\B_\delta(\ox)$ for all $k\in\IN$. Applying then the Fermat stationary rule along with standard subdifferential calculus ensures the inclusions
\begin{eqnarray*}
0&\in&\partial\Big(f+\frac{r}{2}d^2\big(\cdot;(\partial f)^{-1}(u_k^*)\big)-\la u_k^*,\cdot-u_k\ra+\frac{\ve_k}{\lm_k}\|\cdot-\hat x_k\|\Big)(\hat x_k)\\
&\subset&-u_k^*+\partial f(\hat x_k)+\Big[rd\big(\hat x_k;(\partial f)^{-1}(u_k^*)\big)+\frac{\ve_k}{\lm_k}\Big]\B^*.
\end{eqnarray*}
This together with the assumed metric regularity of $\partial f$ around $(\ox,\ox^*)$ yields the inequality
\[
d\big(\hat x_k;(\partial f)^{-1}(u^*_k)\big)\le\kk d\big(u_k^*;\partial f(\hat x_k)\big)\le\kk\Big[rd(\hat x_k;(\partial f)^{-1}(u_k^*)\big)+\frac{\ve_k}{\lm_k}\Big]
\]
for all $k\in\IN$ sufficiently large. Hence for such $k$ the inequality
$$
(1-r\kk)d\big(\hat{x}_k;(\partial f)^{-1}(u_k^*)\big)\disp\le\kappa\frac{\ve_k}{\lambda_k}
$$
is satisfied. It allows us to deduce successively that
\[\begin{array}{ll}
(1-r\kk)d\big(x_k;(\partial f)^{-1}(u_k^*)\big)&\disp\le(1-r\kk)\|\hat x_k-x_k\|+(1-r\kk)d\big(\hat x_k;(\partial f)^{-1}(u_k^*)\big)\\
&\disp\le (1-r\kk)\lm_k+\kk\frac{\ve_k}{\lm_k}=\sqrt{4(1-r\kk)\kk\ve_k}\\
&\disp<(1-r\kk) d\big(x_k;(\partial f)^{-1}(u_k^*)\big),
\end{array}
\]
where the last inequality follows from our choice of $\ve_k$.  This is clearly a contradiction, which proves the existence of neighborhood $U_1$ of $\ox$ and $U^*_1$ of $\ox^*$ such that inequality (\ref{3.14}) holds and so condition \eqref{3.13} is not satisfied. Thus we get the inequality $1-3r\kk\le 0$, or equivalently $r\kk\ge\frac{1}{3}$.

To proceed further, define the real number $r_1:=\frac{3r\kk-1}{2\kk}\in[0,\kk^{-1})$ and observe that inequality \eqref{3.14} can be transformed into \eqref{3.11} with replacing $r$ by $r_1$, $U$ by $U_1$, and $U^*$ by $U^*_1$, respectively. This gives us $r_1\kk\ge\frac{1}{3}$, or equivalently $r\ge\frac{1}{3}+\frac{2}{3^2}$. Defining now $r_2:=\frac{3r_1-1}{2\kk}\in[0,\kk^{-1})$ and proceeding in the same way lead us to the inequality $r\kk\ge\frac{1}{3}+\frac{2}{3^2}+\frac{4}{3^2}$. Finally, by induction we arrive at the progressively stronger bounds
\[
r\kk\ge\frac{1}{3}+\frac{2}{3^2}+\ldots+\frac{2^k}{3^{k+1}}=1-\frac{2^{k+1}}{3^{k+1}}\quad\mbox{for all}\quad k\in\IN.
\]
Letting $k\to\infty$ yields $r\kappa\ge 1$, which is a clear contradiction that completes the proof.
\end{proof}

Employing this lemma, we now establish some characterizations of {\em metric regularity} and {\em strong metric regularity} of the subdifferential for functions on Asplund spaces. Though parts of the next theorem may be found in \cite{MN}, it nicely illustrates the techniques developed in the current work.

\begin{Theorem}{\bf (metric regularity and strong metric regularity of the subdifferential).}\label{thm2} Given $f\colon X\to\oR$ and $(\ox,\ox^*)\in\gph\partial f$, the following assertions are equivalent:

{\bf (i)} The subdifferential $\partial f$ is metrically regular around $(\ox,\ox^*)$ with modulus $\kk>0$ and there exist a real number
$r\in[0,\kk^{-1})$ and neighborhoods $U$ of $\ox$ and $U^*$ of $\ox^*$ such that for any pair $(u,u^*)\in\gph\partial f\cap (U\times U^*)$ we have
condition \eqref{3.11}.

{\bf(ii)} There exist neighborhoods $V$ of $\ox$ and $V^*$ of $\ox^*$ such that for any  $u^*\in V^*$ there is a point $u\in(\partial f)^{-1}(u^*)\cap V$ satisfying the inequality
\begin{eqnarray}\label{3.12}
f(x)\ge f(u)+\la u^*,x-u\ra+\frac{1}{2\kk}\|x-u\|^2\quad\mbox{whenever}\quad x\in V.
\end{eqnarray}

{\bf (iii)} The subdifferential $\partial f$ is metrically regular around $(\ox,\ox^*)$ with modulus $\kk>0$ and there are neighborhoods $W$ of $\ox$ and $W^*$ of $\ox^*$ such that
\begin{eqnarray*}
f(x)\ge f(u)+\la u^*,x-u\ra\quad\quad\mbox{for all}\quad x\in U\;\textrm{ and }\;(u,u^*)\in\gph\partial f\cap(W\times W^*).
\end{eqnarray*}

{\bf(iv)} The point $\bar{x}$ is a local minimizer of the function $x\mapsto f(x)-\langle \bar{x}^{*},x\rangle$ and the subdifferential $\partial f$ is strongly metrically regular around $(\bar{x},\bar{x}^*)$ with modulus $\kk$.
\end{Theorem}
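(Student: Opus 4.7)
My plan is to establish the four equivalences by proving the cycle $(\mathrm{i}) \Leftrightarrow (\mathrm{iii}) \Longrightarrow (\mathrm{ii}) \Longrightarrow (\mathrm{iv}) \Longrightarrow (\mathrm{iii})$, exploiting Lemma~\ref{lem:equiv}, Theorem~\ref{thm1}, and Corollary~\ref{coro0} as the principal tools. The equivalence $(\mathrm{i}) \Leftrightarrow (\mathrm{iii})$ is essentially cost-free: both statements bundle together metric regularity of $\partial f$ around $(\ox,\ox^*)$ with modulus $\kk$ and one of the two growth conditions \eqref{3.11} or \eqref{3.13}, which are equivalent by Lemma~\ref{lem:equiv}.

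For $(\mathrm{iii}) \Longrightarrow (\mathrm{ii})$, I would fix $u^* \in V^*$ in a suitable dual neighborhood and invoke metric regularity to extract a point $u = u(u^*) \in (\partial f)^{-1}(u^*) \cap V$. Inequality \eqref{3.13} shows that $u$ is a local minimizer of $f(\cdot) - \la u^*,\cdot\ra$, so condition \eqref{3.1} at $(u,u^*)$ holds trivially with $r=0$. Applying the forward direction of Theorem~\ref{thm1} at $(u,u^*)$ (metric subregularity being inherited from ambient metric regularity) yields the distance-to-set growth \eqref{3.2} with constant arbitrarily close to $\kk^{-1}$. Since \eqref{3.13} simultaneously supplies condition \eqref{3.10} with $\beta = 0$ at $(u,u^*)$, the converse direction of Corollary~\ref{coro0} upgrades \eqref{3.2} to strong metric subregularity of $\partial f$ at $(u,u^*)$. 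In particular $u$ is isolated in $(\partial f)^{-1}(u^*)$ locally, so $d(x;(\partial f)^{-1}(u^*)) = \|x-u\|$ for $x$ near $u$, producing the pointwise growth \eqref{3.12} with constant $\tfrac{1}{2\kk}$.

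The remaining two implications close the cycle. For $(\mathrm{ii}) \Longrightarrow (\mathrm{iv})$, choosing $u^* = \ox^*$ and $u = \ox$ in \eqref{3.12} shows that $\ox$ is a strict local minimizer of $f - \la \ox^*,\cdot\ra$; metric regularity of $\partial f$ around $(\ox,\ox^*)$ is then extracted from \eqref{3.12} by an Ekeland-type argument patterned after the proof of Theorem~\ref{thm1}, and the strict quadratic growth in \eqref{3.12} simultaneously forces the selection $u(\cdot)$ to be single-valued and Lipschitz, upgrading to strong metric regularity with modulus $\kk$. For $(\mathrm{iv}) \Longrightarrow (\mathrm{iii})$, strong metric regularity clearly entails metric regularity, and the Lipschitz single-valued localization $s$ of $(\partial f)^{-1}$, together with the local-minimum property at $\ox$, transfers to each nearby $u = s(u^*)$ through tilt-stability-style arguments available from \cite{MN}, yielding \eqref{3.13}.

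I expect the main technical obstacle to lie in the upgrade step within $(\mathrm{iii}) \Longrightarrow (\mathrm{ii})$: promoting distance-to-set growth \eqref{3.2} to pointwise form \eqref{3.12} requires local single-valuedness of $(\partial f)^{-1}$, which \eqref{3.13} on its own does not yield (the bare subgradient inequality only forces equality of function values at candidate inverse images). What makes the argument work is the observation that \eqref{3.13}, evaluated at every nearby graph point, is tailor-made to serve as the $\beta = 0$ hypothesis \eqref{3.10} of Corollary~\ref{coro0}, so that Theorem~\ref{thm1} and Corollary~\ref{coro0} can be applied in tandem rather than sequentially.
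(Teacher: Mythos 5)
The equivalence $[{\bf(i)}\Longleftrightarrow{\bf(iii)}]$ via Lemma~\ref{lem:equiv} matches the paper, but your cycle then runs in the opposite direction from the paper's ($[{\bf(i)}\Rightarrow{\bf(iii)}\Rightarrow{\bf(iv)}\Rightarrow{\bf(ii)}\Rightarrow{\bf(i)}]$), and this forces you to take on the two genuinely hard implications with tools that do not deliver them. In $[{\bf(iii)}\Rightarrow{\bf(ii)}]$ you apply Theorem~\ref{thm1} and Corollary~\ref{coro0} at each pair $(u,u^*)$ separately. Both results produce a growth radius $\eta$ that depends on the pair and comes out of a contradiction/Ekeland argument with no quantitative control, whereas assertion {\bf(ii)} demands a single neighborhood $V$ of $\ox$ valid simultaneously for every $u^*\in V^*$; this uniformity issue is exactly why the paper proves the uniform Lemma~\ref{lem:equiv} instead of quoting Theorem~\ref{thm1} pointwise, and why it obtains \eqref{3.12} only after passing to strong metric regularity, by citing \cite[Theorem~3.2]{MN}. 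In addition, Theorem~\ref{thm1} only yields constants $\al$ strictly below $\kk^{-1}$ (with the neighborhood possibly shrinking as $\al\uparrow\kk^{-1}$), so even pointwise your argument gives coefficients strictly smaller than the exact $\frac{1}{2\kk}$ required in \eqref{3.12}.

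The implication $[{\bf(ii)}\Rightarrow{\bf(iv)}]$ is the more serious gap. Statement {\bf(ii)} supplies, for $u^*=\ox^*$, \emph{some} point $u\in(\partial f)^{-1}(\ox^*)\cap V$, not necessarily $\ox$, so ``choosing $u=\ox$ in \eqref{3.12}'' is unwarranted; without tethering $u(\ox^*)$ to $\ox$ you obtain neither the local minimality of $\ox$ for $f-\la\ox^*,\cdot\ra$ nor a single-valued localization of $(\partial f)^{-1}$ around $(\ox^*,\ox)$. Moreover, uniform quadratic growth of the tilted functions gives uniqueness of their \emph{minimizers}, while $(\partial f)^{-1}(u^*)\cap V$ may contain additional non-minimizing critical points (already for a ${\cal C}^2$ function on $\R$ equal to $\frac{1}{2}x^2$ near the tilt-minimizers but carrying a bump with a critical point elsewhere in $V$); ruling such points out is the actual content of strong metric regularity and is not a by-product of \eqref{3.12}, so ``the growth forces the selection to be single-valued'' does not upgrade to {\bf(iv)}. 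The paper sidesteps both difficulties: it deduces {\bf(iv)} from {\bf(iii)} by noting that the localized subdifferential is monotone by \eqref{3.13} and metrically regular, so its inverse is inner semicontinuous and hence locally single-valued by Kenderov's theorem \cite{k}, and only then invokes \cite[Theorem~3.2]{MN} for $[{\bf(iv)}\Rightarrow{\bf(ii)}]$, closing the cycle with the easy $[{\bf(ii)}\Rightarrow{\bf(i)}]$. Your step $[{\bf(iv)}\Rightarrow{\bf(iii)}]$ (``tilt-stability-style arguments from \cite{MN}'') is likewise the nontrivial content of that cited theorem rather than an argument; if you are willing to rely on it, the paper's orientation of the cycle uses it exactly once and avoids both hard directions altogether.
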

{\bf Proof.} Observe first that implication [{\bf(i)}$\Longrightarrow${\bf(iii)}] immediately follows from Lemma~\ref{lem:equiv}. Now we show that [{\bf(iii)}$\Longrightarrow${\bf(iv)}]. Suppose to this end that assertion {\bf(iii)} holds and let $T$ be a localization of $\partial f$ with respect to $W$ and $W^*$. It is easy to check that the operator $T$ is monotone, i.e.,
\[
\la x^*-u^*,x-u\ra\ge 0\quad\mbox{whenever}\quad(x,x^*),\,(u,u^*)\in\gph T.
\]
Since $\partial f$ is metrically regular around $(\ox,\ox^*)$, we may assume that $T$ also has this property around $(\ox,\ox^*)$. Taking into account that metric regularity of a mapping is equivalent to the Lipschitz-like/Aubin property of the inverse (and hence it is lower/inner semicontinuous), the local monotonicity of $T^{-1}$ implies its local {\em single-valuedness} around $\ox$ by the classical Kenderov theorem \cite{k}. Thus $\partial f$ is strongly metrically regular around $(\ox,\ox^*)$, which verifies {\bf(iv)}. Finally, implication [{\bf(iv)}$\Longrightarrow${\bf(ii)}] follows from \cite[Theorem~3.2]{MN} while [{\bf(ii)}$\Longrightarrow${\bf(i)}] holds trivially.\endproof

When $X$ is a Hilbert space, the following consequence of Theorem~\ref{thm2} shows that the estimate in \eqref{3.11} can be replaced by a more direct and seemingly weaker condition.

\begin{Corollary}{\bf (metric regularity and strong metric regularity of the subdifferential in Hilbert spaces).}\label{coro1} Let $X$ be a Hilbert space in the setting of Theorem~{\rm\ref{thm2}}. Then all the assertions of this theorem are equivalent to the following one:

{\bf (v)} The subgradient mapping $\partial f$ is metrically regular around $(\ox,\ox^*)$ with modulus $\kk>0$ and there are some $r\in[0,\kk^{-1})$ and neighborhoods $U$ of $\ox$, $U^*$ of $\ox^*$ such that
\begin{eqnarray}\label{3.15}
f(x)\ge f(u)+\la u^*,x-u\ra-\frac{r}{2}\|x-u\|^2\quad\mbox{for all}\quad x\in U,\;(u,u^*)\in\gph\partial f\cap(U\times U^*).
\end{eqnarray}
\end{Corollary}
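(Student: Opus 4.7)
The implication (i)$\Rightarrow$(v) is trivial: since $u\in(\partial f)^{-1}(u^*)$, one has $d(x;(\partial f)^{-1}(u^*))\le\|x-u\|$, so $-\tfrac{r}{2}\|x-u\|^2\le-\tfrac{r}{2}d^2(x;(\partial f)^{-1}(u^*))$ and the right-hand side of \eqref{3.11} dominates that of \eqref{3.15}. By the equivalence established in Theorem~\ref{thm2}, any of (i)--(iv) thereby implies (v), so the real work lies in the converse.

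For (v)$\Rightarrow$(iv), the plan is to apply Theorem~\ref{thm2} to the Hilbert-space quadratic regularization
\[
\tilde g(x):=f(x)+\frac{r}{2}\|x-\ox\|^2,\qquad\partial\tilde g(x)=\partial f(x)+r(x-\ox),
\]
which in particular satisfies $\ox^*\in\partial\tilde g(\ox)$. First I would verify condition (iii) of Theorem~\ref{thm2} for $\tilde g$ around $(\ox,\ox^*)$. The pivotal identity is the Hilbert-space expansion $\|x-\ox\|^2-\|u-\ox\|^2=\|x-u\|^2+2\langle x-u,u-\ox\rangle$; plugging it into $\tilde g(x)-\tilde g(u)$ and inserting \eqref{3.15} causes the $\tfrac{r}{2}\|x-u\|^2$ penalty to cancel exactly against the $+\tfrac{r}{2}\|x-u\|^2$ produced by the expansion, leaving the clean subgradient inequality
\[
\tilde g(x)\ge\tilde g(u)+\langle v,x-u\rangle\quad\text{for all }x\in U,\;(u,v)\in\gph\partial\tilde g\text{ near }(\ox,\ox^*),
\]
where $v:=u^*+r(u-\ox)$ and $(u,u^*)$ ranges over $\gph\partial f$ close to $(\ox,\ox^*)$. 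Metric regularity of $\partial\tilde g$ around $(\ox,\ox^*)$ follows from the Lyusternik--Graves perturbation theorem applied to the Lipschitz perturbation $r(\cdot-\ox)$ with constant $r<\kk^{-1}$, so condition (iii) of Theorem~\ref{thm2} is now in place for $\tilde g$.

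Next, Theorem~\ref{thm2} applied to $\tilde g$ delivers (iv) for $\tilde g$: strong metric regularity of $\partial\tilde g$ around $(\ox,\ox^*)$ together with local minimality of $\ox$ for $\tilde g-\langle\ox^*,\cdot\rangle$. The local-minimum property rewrites directly as the near-minimum inequality \eqref{3.8} for $f$ at $(\ox,\ox^*)$ with the same parameter $r$, while strong metric regularity of $\partial\tilde g=\partial f+r(\cdot-\ox)$ transfers, by solving the implicit equation $x=(\partial\tilde g)^{-1}(u^*+r(x-\ox))$ through the contraction mapping principle, to strong metric regularity and hence strong metric subregularity of $\partial f$ at $(\ox,\ox^*)$. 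Finally, the implication $[\text{(i)}\Rightarrow\text{(ii)}]$ of Corollary~\ref{coro0}, applied with these two ingredients, upgrades \eqref{3.8} to the genuine quadratic growth \eqref{3.9} and thereby shows that $\ox$ is a true local minimizer of $f-\langle\ox^*,\cdot\rangle$. This establishes (iv) of Theorem~\ref{thm2} for $f$, from which (i)--(iii) follow at once. The main obstacle I anticipate is the perturbation transfer of strong metric regularity from $\partial\tilde g$ to $\partial f$; checking the compatibility between the Lipschitz modulus $r$ of the perturbation $r(\cdot-\ox)$ and the single-valued Lipschitz modulus of $(\partial\tilde g)^{-1}$ is the most delicate calculation in the argument.
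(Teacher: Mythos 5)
Your first half runs parallel to the paper's own argument: the implication (i)$\Rightarrow$(v) is indeed immediate from $d(x;(\partial f)^{-1}(u^*))\le\|x-u\|$, and the regularization $g:=f+\frac{r}{2}\|\cdot-\ox\|^2$ with the cancellation identity $\|x-\ox\|^2-\|u-\ox\|^2=\|x-u\|^2+2\la x-u,u-\ox\ra$ is exactly the paper's device; likewise metric regularity of $\partial g$ with modulus $\frac{\kk}{1-r\kk}$ via the Lyusternik--Graves/radius theorem (the paper cites \cite[Theorem~3.3]{DLR}) is fine because $r<\kk^{-1}$. So condition (iii) of Theorem~\ref{thm2} does hold for $g$, and its (iv) gives you strong metric regularity of $\partial g$ together with \eqref{3.8} for $f$.

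The gap is in the back-transfer. Your contraction/implicit-equation step (and equally the weaker step of showing $\ox$ is isolated in $(\partial f)^{-1}(\ox^*)$ so that Corollary~\ref{coro0} applies) requires the product of the perturbation Lipschitz constant $r$ with the Lipschitz modulus $\frac{\kk}{1-r\kk}$ of the single-valued localization of $(\partial g)^{-1}$ to be smaller than $1$; this amounts to $r\kk<\frac{1}{2}$, whereas (v) only guarantees $r\in[0,\kk^{-1})$. For $r\kk\in[\tfrac12,1)$ the fixed-point estimate $\|x_1-x_2\|\le r\frac{\kk}{1-r\kk}\|x_1-x_2\|$ concludes nothing, so as written the proof covers only part of the admissible range of $r$, and you cannot invoke the general ``metric regularity plus hypomonotonicity implies strong regularity'' fact without circularity, since that is essentially what the corollary asserts. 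The paper avoids the one-shot transfer altogether: it applies Theorem~\ref{thm2} to $g$ only to extract the uniform growth \eqref{3.12} for $g$, translates it back to $f$ (the computation culminating in \eqref{3.16}), and observes that this reproduces \eqref{3.15} with the strictly better constant $r_1=\frac{2r\kk-1}{\kk}$; iterating (set up as a proof by contradiction) forces $r\kk\ge1-2^{-k}$ for every $k$, hence $r\kk\ge1$, which is impossible, so (iii) of Theorem~\ref{thm2} holds for $f$ itself. Your argument can be repaired in the same spirit: iterate your own construction until the running constant drops below $\frac{1}{2\kk}$ (or below $0$), and only then perform the contraction transfer; without such a bootstrapping loop the proof is incomplete.
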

{\bf Proof.} It is easy to see that assertion {\bf(i)} of Theorem~\ref{thm2} implies {\bf(v)}. Thus we only need to verify that {\bf(v)} implies assertion {\bf(iii)} therein. To this end, suppose that {\bf(v)} holds while {\bf(iii)} does not. Define the function $g(x):=f(x)+\frac{r}{2}\|x-\ox\|^2$ and observe that $\partial g=\partial f+r(I-\ox)$, where $I:X\to X$ is the identity mapping. Define further $J(x,y):=(x,y+r(x-\ox))$ for $(x,y)\in X\times X$ and deduce from the classical open mapping theorem that $W:=J(U\times U^*)$ is a neighborhood of $(\ox,\ox^*)$. Using \eqref{3.15} ensures that
\begin{eqnarray*}
g(x)\ge g(u)+\la u^*,x-u\ra\quad\mbox{for all}\quad x\in U\;\textrm{ and }\;(u,u^*)\in\gph\partial g\cap W.
\end{eqnarray*}
Furthermore, it follows from \cite[Theorem~3.3]{DLR}  that the subdifferential mapping $\partial g$ is metrically regular around $(\ox,\ox^*)$ with radius $\frac{\kk}{1-r\kk}$. Employing now Theorem~\ref{thm2} for the function $g$ allows us to find neighborhoods $W_1$ of $\ox$ and $W^*_1$ of $\ox^*$ satisfying
\[
g(x)\ge g(u)+\la u^*,x-u\ra+\frac{1-r\kk}{2\kk}\|x-u\|^2\quad\mbox{for all}\quad x\in W_1,(u,u^*)\in\gph\partial g\cap(W_1\times W^*_1).
\]
Hence there are neighborhoods $U_1$ of $\ox$ and $U^*_1$ of $\ox^*$ with $U_1\times U^*_1\subset J^{-1}(W_1\times W^*_1)$ such that for any point $x\in U_1$ and any pair $(u,u^*)\in\gph\partial f\cap(U_1\times U_1^*)$ we have
\begin{eqnarray}
f(x)&\ge& f(u)+\la u^*,x-u\ra+\frac{1-r\kk}{2\kk}\|x-u\|^2+\frac{r}{2}\Big[\|u-\ox\|^2-\|x-\ox\|^2+2\la u-\ox,x-u\ra\Big]\nonumber\\
&\ge& f(u)+\la u^*,x-u\ra+\frac{1-r\kk}{2\kk}\|x-u\|^2+\frac{r}{2}\Big[\la u-x, u+x-2\ox\ra+2\la u-\ox,x-u\ra\Big]\nonumber\\
&\ge& f(u)+\la u^*,x-u\ra+\frac{1-r\kk}{2\kk}\|x-u\|^2-\frac{r}{2}\|u-x\|^2\nonumber\\
&\ge& f(u)+\la u^*,x-u\ra+\frac{1-2r\kk}{2\kk}\|x-u\|^2.\label{3.16}
\end{eqnarray}
Consequently it ensures that $1-2r\kk\le 0$, or equivalently $r\kk\ge\frac{1}{2}$. Defining $r_1:=\frac{2r\kk-1}{\kk}\in[0,\kk^{-1})$, we rewrite \eqref{3.16} in form \eqref{3.15} by replacing $r$ with $r_1$. This gives us in turn that $r_1\kk\ge\frac{1}{2}$, or equivalently $r\kk\ge\frac{1}{2}+\frac{1}{2^2}$. By induction we deduce the estimate
\[
r\kk\ge\frac{1}{2}+\frac{1}{2^2}+\ldots+\frac{1}{2^k}=1-\frac{1}{2^k}\quad\mbox{for all}\quad k\in\IN.
\]
Letting $k\to\infty$ yields $r\kk\ge 1$, which is a clear contradiction that thus justifies {\bf(iii)}.\endproof

\section{New Second-Order Characterizations of Tilt Stability}
\setcounter{equation}{0}

In this section we continue the study of metric regularity and strong metric regularity of the limiting subdifferential $\partial f$ with the goal of studying their relationships with the fundamental notion of {\em tilt stability} in optimization, formulated in Definition~\ref{tilt}. To motivate the subsequent discussion, consider first a ${\cal C}^2$-smooth function $f\colon\R^n\to\R$ and its {\em local minimizer} $\bar{x}$. It is rather standard to deduce from classical results of nonlinear analysis that the following conditions are {\em equivalent}:\vspace*{0.05in}

{\bf (a)} The gradient mapping $\nabla f\colon\R^n\to\R^n$ is metrically regular around $(\bar{x},0)$.\vspace*{0.05in}

{\bf (b)} The gradient mapping $\nabla f$ is strongly metrically regular around $(\bar{x},0)$.\vspace*{0.05in}

{\bf (c)} The Hessian matrix $\nabla^2f(\bar{x})\in\R^n\times\R^n$ is positive-definite.\vspace*{0.05in}

{\bf (d)} ${\rm Ker}\,\nabla^2f(\bar{x})=\{0\}$ for the Hessian kernel ${\rm Ker}\,\nabla^2 f(\bar{x}):=\{u\in\R^n|\;\nabla^2 f(\ox)u=0\}$.\\[1ex]
It is worth recalling to this end that condition ${\bf (c)}$ is {\em sufficient} for local optimality of $\ox$ while the positive-semidefiniteness of $\nabla^2 f(\ox)$ is {\em necessary} for $\ox$ to be a local minimizer of $f$.\vspace*{0.05in}

In what follows we would like to analyze to which extend the above equivalences between conditions ${\bf (a)}$--${\bf (d)}$ for local minimizers carry over to the nonsmooth setting. To proceed, let us define first appropriate second-order generalized differential constructions playing the role of the Hessian. We begin with generalized normals to sets.

Given a nonempty subset $\O\subset X$ of an Asplund space, we associate with it the {\em indicator function} $\delta_\O\colon X\to\oR$ by setting it equal to zero on $\O$ and to $\infty$ otherwise. Geometric analogues of the subdifferentials \eqref{2.1} and \eqref{2.2}
are the corresponding {\em normal cones} given by
\begin{eqnarray}\label{2.3}
\Hat N_\O(\ox):=\Hat\partial\delta_\O(\ox)\quad\mbox{ and }\quad N_\O(\ox):=\partial\delta_\O(\ox),
\end{eqnarray}
which relate to each other by \eqref{2.2} via the outer limit
\begin{equation}\label{2.3a}
N_\O(\ox)=\Limsup_{x\st{\O}{\to}\ox}\Hat N_\O(x),
\end{equation}
where $x\st{\O}{\to}\ox$ signifies the convergence $x\to\ox$ with $x\in\O$.

Next we recall the {\em coderivative} constructions for set-valued mappings between Asplund spaces defined via the normal cones \eqref{2.3} to their graphs.
\begin{Definition}{\bf (coderivatives of mappings).} {\rm Let $F\colon X\tto Y$ be a set-mapping with nonempty domain. The {\sc regular} and {\sc limiting coderivatives} of $F$ at $(\bar{x},\bar{y})\in\gph F$ are set-valued mappings from $Y^{*}$ to $X^{*}$ given, respectively, by
\begin{eqnarray}\label{2.4}
\Hat D^*F(\ox,\oy)(y^*):=\big\{x^*\in X^*\big|\;(x^*,-y^*)\in\Hat N_{\gph F}(\ox,\oy)\big\}\;\mbox{ for all }\;y^*\in Y^*,
\end{eqnarray}
\begin{eqnarray}\label{2.5}
D^*F(\ox,\oy)(y^*):=\big\{x^*\in X^*\big|\;(x^*,-y^*)\in N_{\gph F}(\ox,\oy)\big\}\;\mbox{ for all }\;y^*\in Y^*,
\end{eqnarray}
where we omit $\oy=F(\ox)$ for single-valued mappings.}
\end{Definition}

The coderivative constructions in (\ref{2.4}) and (\ref{2.5}) have proven themselves to be useful tools for the study and characterizations of well-posedness and other issues in variational analysis and optimization; see, e.g., \cite{M1,RW} for more details and references. In particular,
applying the coderivative \eqref{2.5} to the subdifferential mapping $\partial f\colon X\tto X^*$ for an extended-real-valued function $f$ yields the
{\em second-order subdifferential} (or {\em generalized Hessian}) of $f$ at $(\ox,\ox^*)\in\gph\partial f$ introduced in \cite{M0} by
\begin{eqnarray}\label{2.10}
\partial^2f(\ox,\ox^*)(u):=\big(D^*\partial f\big)(\ox,\ox^*)(u)\quad\mbox{ for all }\;u\in X^{**},
\end{eqnarray}
which plays a crucial role in the subsequent results of this section. Note that
\begin{equation}\label{c2}
\partial^2 f(\ox)(u)=\big\{\nabla^2 f(\ox)^*u\}\;\mbox{ for all }\;u\in X^{**}
\end{equation}
if $f\in{\cal C}^2$, where $\nabla^2f(\ox)^*=\nabla^2f(\ox)$ in the case of Hilbert spaces $X$. The first remarkable fact about the generalized Hessian \eqref{2.10} is that in its terms we have the full counterpart of the equivalence between the above conditions {\bf (a)} and {\bf (d)} in the {\em general nonsmooth case} of $f\colon\R^n\to\oR$, where the extension of {\bf (d)} is written as
\begin{equation}\label{ker}
{\rm Ker}\,\partial^2 f(\ox,\ox^*)=\{0\}\;\mbox{ with }\;{\rm Ker}\,\partial^2f(\ox,\ox^*):=\big\{u\in\R^n\big|\;0\in\partial^2 f(\ox,\ox^*)(u)\big\}.
\end{equation}
Characterization \eqref{ker} of the metric regularity of $\partial f$ does not even require $\ox$ to be a local minimizer of $f$ while only imposing the closedness assumption on $\gph\partial f$, which is always the case when $f$ is either continuous around $\ox$ or it belongs to favorable classes of extended-real-valued l.s.c.\ functions overwhelmingly encountered in variational analysis and optimization; see, e.g., \cite{M1,RW} for more details. This result follows from the more general characterization ${\rm Ker}\,D^*F(\ox,\oy)=\{0\}$ of metric regularity of closed-graph mappings $F\colon\R^n\tto\R^m$ around $(\ox,\oy)\in\gph F$ known as the coderivative or Mordukhovich criterion; see \cite[Corollary~4.3]{mor93}, \cite[Theorem~9.43]{RW}, and the references therein. The Asplund space extension of this criterion is given in the book \cite[Theorem~4.18]{M1}, where the reader can find further references and discussions.

Another important application of the generalized second-order construction \eqref{2.10} is that replacing $\nabla^2f(\ox)$ by $\partial^2f(\ox,\ox^*)$ in the positive-definiteness condition {\bf (c)} provides a {\em characterization} of {\em tilt-stable} local minimizers of $f$ for a broad class of extended-real-valued functions. To proceed, we need to recall the notion of prox-regularity introduced in \cite{PR1}.

\begin{Definition}{\bf (prox-regularity of functions).}\label{prox-reg} {\rm A function $f:X\to\oR$ is {\sc prox-regular} at $\ox$ for $\ox^*\in\partial f(\ox)$ if $f$ is finite at $\bar{x}$ and there are constants $r>0$ and $\ve>0$ such that for all $x,u\in\B_\ve(\ox)$ with $|f(u)-f(\ox)|\le\ve$ we have
\begin{eqnarray}\label{2.8}
f(x)\ge f(u)+\la u^*,x-u\ra-\frac{r}{2}\|x-u\|^2\quad\mbox{ whenever }\;u^*\in\partial f(u)\cap\B_\ve(\ox^*).
\end{eqnarray}}
\end{Definition}

The next definition asserts a kind of continuity of $f$ relative to convergence on $\gph\partial f$.

\begin{Definition}{\bf (subdifferential continuity of functions).} {\rm A function $f\colon X\to\overline{\R}$ is {\sc subdifferentially continuous} at $\bar{x}$ for $\bar{x}^*\in\partial f(\bar{x})$ if the mapping $(x,x^*)\mapsto f(x)$ is continuous at $(\ox,\ox^*)$ relative to the subdifferential graph $\gph\partial f$ in the norm$\times$norm topology.}
\end{Definition}

When $f$ is both prox-regular and subdifferentially continuous at $\ox$ for $\ox^*\in\partial f(\ox)$, it is easy to see that the condition ``$|f(u)-f(\ox)|\le\ve$" can be omitted in the definition of prox-regularity. In this case the {\em subdifferential graph} $\gph\partial f$ is {\em locally closed} around $(\ox,\ox^*)$ in the norm$\times$norm topology. It is well known from \cite{BT2,BT1,PR1,RW} that the class of prox-regular and subdifferentially continuous functions is rather broad including, in particular, {\em strongly amenable} functions \cite{RW} on $\R^n$ and all the l.s.c.\ convex functions on arbitrary Banach spaces.\vspace*{0.05in}

Now we ready to recall a major notion of tilt-stable minimizers introduced in \cite{PR2}.

\begin{Definition}{\bf (tilt stability of locally optimal solutions).}\label{tilt} {\rm Given $f\colon X\to\oR$, we say that a point $\ox\in\dom f$ is a {\sc tilt-stable local minimizer} of $f$ if there is $\gg>0$ such that the mapping
\begin{eqnarray}\label{2.9}
M_\gg:x^*\mapsto{\rm argmin}\big\{f(x)-\la x^*,x\ra\big|\;x\in\B_\gg(\ox)\big\}
\end{eqnarray}
is single-valued and Lipschitz continuous on some neighborhood of $0\in X^*$ with $M_\gg(0)=\ox$.}
\end{Definition}

We also consider a {\em quantitative} version of this notion by specifying the modulus of tilt stability. Namely, $\ox$ is a tilt-stable local minimizer of $f$ with {\em modulus} $\kk>0$ if the mapping $M_\gg$ in \eqref{2.9} is single-valued with $M_\gg(0)=\ox$ and Lipschitz continuous with constant $\kk$.\vspace*{0.05in}

The next result shows the close relationship between tilt-stability and strong metric regularity of the subdifferential. Finite-dimensional versions of this relationship (without modulus part) were revealed in \cite[Proposition~7.2]{LZ} and \cite[Theorem~3.3]{DL} while its Asplund space counterpart with Hilbert space specification were given in \cite[Theorem~4.2]{MN}.

\begin{Proposition}{\bf (tilt-stability via strong metric regularity of the subdifferential in Hilbert spaces).}\label{pro1} Let $X$ be Hilbert, and let $f\colon X\to\oR$ be subdifferentially continuous at $\ox\in\dom f$ for $\ox^*=0\in\partial f(\bar{x})$. Then all the assertions of Corollary~{\rm\ref{coro1}} are equivalent to the following:

{\bf (vi)} $f$ is prox-regular at $\ox$ for $\ox^*$ and $\ox$ is a tilt-stable local minimizer of $f$ with modulus $\kk$.
\end{Proposition}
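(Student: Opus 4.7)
The plan is to establish the equivalence of assertion \textbf{(vi)} with assertion \textbf{(ii)} of Theorem~\ref{thm2}; by Corollary~\ref{coro1} this extends automatically to equivalence with all of \textbf{(i)}--\textbf{(v)}.

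For the implication \textbf{(ii)}$\Longrightarrow$\textbf{(vi)}, I would first invoke Corollary~\ref{coro1} to pass from \textbf{(ii)} to assertion \textbf{(iv)} of Theorem~\ref{thm2}, yielding a single-valued $\kk$-Lipschitz localization $T$ of $(\partial f)^{-1}$ around $(0,\ox)$ with $T(0)=\ox$. Intersecting the neighborhoods coming from \textbf{(ii)} and \textbf{(iv)}, uniqueness of $T$ forces the point $u$ selected in~\eqref{3.12} to coincide with $T(u^*)$, so the growth estimate takes the form
\[
f(x)\ge f(T(u^*))+\langle u^*,x-T(u^*)\rangle+\tfrac{1}{2\kk}\|x-T(u^*)\|^2
\]
for all $x$ in a neighborhood $V$ of $\ox$ and all $u^*$ in a neighborhood $V^*$ of $0$. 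Since strong metric regularity makes $(T(u^*),u^*)$ the only graphical point of $\partial f$ with argument $u^*$ near $(\ox,0)$, this inequality automatically holds at every $(u,u^*)\in\gph\partial f$ close to $(\ox,0)$ and---combined with subdifferential continuity, which renders $|f(u)-f(\ox)|\le\ve$ automatic---yields prox-regularity of $f$ at $\ox$ for $0$. Rewriting the estimate as $f(x)-\langle u^*,x\rangle>f(T(u^*))-\langle u^*,T(u^*)\rangle$ for $x\in V\setminus\{T(u^*)\}$ identifies $T(u^*)$ as the \emph{strict} minimizer of $f-\langle u^*,\cdot\rangle$ over $V$. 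Choosing $\gg>0$ with $\B_\gg(\ox)\subset V$ and then shrinking $V^*$ so that $T(V^*)\subset\inte\B_\gg(\ox)$, I obtain $M_\gg(u^*)=\{T(u^*)\}$; hence $M_\gg$ is single-valued and $\kk$-Lipschitz near $0$ with $M_\gg(0)=\ox$, establishing tilt-stability with modulus $\kk$.

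For the converse \textbf{(vi)}$\Longrightarrow$\textbf{(ii)}, it is enough (by Corollary~\ref{coro1}) to verify assertion \textbf{(iv)} of Theorem~\ref{thm2}. The local-minimizer condition is immediate from $M_\gg(0)=\ox$. The strong metric regularity of $\partial f$ around $(\ox,0)$ with modulus exactly $\kk$ is in effect \cite[Theorem~4.2]{MN}: tilt-stability provides $M_\gg$ as a $\kk$-Lipschitz selection into $(\partial f)^{-1}$ via the Fermat rule, and the combination of prox-regularity with subdifferential continuity produces monotonicity-style arguments in the Hilbert space $X$ that rule out any additional graphical points near $(\ox,0)$, so $M_\gg$ itself serves as the single-valued Lipschitz localization.

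The main obstacle is this last direction, in which the tilt-stability modulus $\kk$ must transfer \emph{exactly} to the metric-regularity modulus rather than be inflated to something of the form $\kk/(1-r\kk)$ by a naive combination with the prox-regularity constant $r$. The resolution requires recognizing $M_\gg$ itself---not some enlargement built from $r$ and $\kk$---as the genuine single-valued localization of $(\partial f)^{-1}$, which is the essential content of \cite[Theorem~4.2]{MN}.
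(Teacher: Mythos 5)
Your proposal is correct and follows essentially the same route as the paper: the paper's own proof is a one-line deduction from \cite[Theorem~4.2]{MN} together with Corollary~\ref{coro1}, which is precisely the skeleton you use, with the only difference that you spell out the direction from the growth condition \eqref{3.12} plus strong metric regularity to prox-regularity and tilt stability rather than delegating it to \cite{MN}. Your closing observation that the exact transfer of the modulus $\kk$ in the converse direction is the genuine content of \cite[Theorem~4.2]{MN} is exactly where the paper also places that burden.
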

{\bf Proof.} This can be easily deduced from \cite[Theorem~4.2]{MN} and Corollary~\ref{coro1}.\endproof

The main result of Poliquin and Rockafellar \cite[Theorem~1.3]{PR2} is the following second-order subdifferential characterization of tilt stability in finite-dimensional spaces. Note that it shows, in particular, that for ${\cal C}^2$-smooth functions $f\colon\R^n\to\R$ the positive-definiteness of the Hessian $\nabla^2 f(\ox)$ is {\em necessary and sufficient} for $\ox$ being a tilt-stable minimizer of $f$. As we see below, the main result of \cite{PR2} is implied by our more general results obtained in this section (see Theorem~\ref{coro5}) with a more direct and simple proof in comparison with the original one in \cite[Theorem~1.3]{PR2}.

\begin{Theorem} {\bf(tilt stability via the generalized Hessian in finite dimensions).}\label{PR} Consider $f\colon\R^n\to\oR$ with $\ox^*=0\in\partial f(\ox)$ and assume that $f$ is both prox-regular and subdifferentially continuous at $\ox$ for $\ox^*$. Then we have the equivalent assertions:

{\bf (i)} The point $\ox$ is a tilt-stable local minimizer of the function $f$.

{\bf(ii)} The generalized Hessian $\partial^2f(\ox,0)$ is positive-definite in the sense that
\begin{eqnarray}\label{4.11a}
\la u^*,u\ra>0\quad\mbox{whenever}\quad u^*\in\partial^2f(\ox,0)(u),\;u\ne 0.
\end{eqnarray}
\end{Theorem}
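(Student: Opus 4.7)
My plan is to derive Theorem~\ref{PR} by routing it through the link established in Proposition~\ref{pro1} between tilt stability of $\ox$ and strong metric regularity of $\partial f$ at $(\ox,0)$, then translating the latter into a condition on the generalized Hessian $\partial^{2}f(\ox,0)=D^{*}(\partial f)(\ox,0)$ by means of the finite-dimensional coderivative (Mordukhovich) criterion. In this way both implications reduce to comparisons between two quantitative conditions: (a) the uniform quadratic growth inequality~(\ref{3.12}) of Theorem~\ref{thm2}, and (b) a positive-definiteness lower bound on $\partial^{2}f(\ox,0)$.

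For $(\textbf{i})\Longrightarrow(\textbf{ii})$ the plan is the following. Tilt stability with some modulus $\kappa>0$ yields, by Proposition~\ref{pro1} together with assertions $(\textbf{ii})$ and $(\textbf{iv})$ of Theorem~\ref{thm2}, a single-valued $\kappa$-Lipschitz localization $h\colon V^{*}\to V$ of $(\partial f)^{-1}$ whose graph coincides locally with $\gph\partial f$, and also the uniform quadratic growth~(\ref{3.12}). Applying~(\ref{3.12}) symmetrically at the pairs $(h(x^{*}),x^{*})$ and $(h(y^{*}),y^{*})$ in $\gph\partial f$ and adding the two inequalities upgrades the growth condition to the strong monotonicity estimate
$$\langle y^{*}-x^{*},h(y^{*})-h(x^{*})\rangle\ge\kappa^{-1}\|h(y^{*})-h(x^{*})\|^{2}.$$
Now for $u^{*}\in\partial^{2}f(\ox,0)(u)$, unwind the definitions~(\ref{2.5})--(\ref{2.10}): there exist sequences $x_{k}^{*}\to 0$ and regular normals $(u_{k}^{*},-u_{k})\to(u^{*},-u)$ to $\gph\partial f$ at $(h(x_{k}^{*}),x_{k}^{*})$. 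At the a.e.\ Rademacher differentiability points of $h$ the regular normal relation forces $u_{k}=h'(x_{k}^{*})^{\top}u_{k}^{*}$, and strong monotonicity propagates to $\langle w,h'(x_{k}^{*})w\rangle\ge\kappa^{-1}\|h'(x_{k}^{*})w\|^{2}$; combined with the symmetry of $h'$ at such points (inherent since $h$ is a selection of the \emph{inverse} subdifferential of $f$, hence locally the gradient of a smooth convex function) this gives $\langle u_{k}^{*},u_{k}\rangle\ge\kappa^{-1}\|u_{k}\|^{2}$, and passing to the limit yields the quantitative bound $\langle u^{*},u\rangle\ge\kappa^{-1}\|u\|^{2}$, sharper than~(\ref{4.11a}).

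For $(\textbf{ii})\Longrightarrow(\textbf{i})$ the plan is shorter. The set-valued mapping $\partial^{2}f(\ox,0)$ has closed graph (outer limit~(\ref{pk})) and is positively homogeneous of degree one, so in $\R^{n}$ the continuous functional $(u,u^{*})\mapsto\langle u^{*},u\rangle$ attains a strictly positive minimum $c$ on the compact set $\{(u,u^{*})\,:\,u^{*}\in\partial^{2}f(\ox,0)(u),\,\|u\|=1\}$ whenever~(\ref{4.11a}) holds. This upgrades~(\ref{4.11a}) to the uniform bound $\langle u^{*},u\rangle\ge c\|u\|^{2}$. The quantitative form of the Mordukhovich coderivative criterion in $\R^{n}$ then characterizes this bound as strong metric regularity of $\partial f$ around $(\ox,0)$ with modulus $\kappa=1/c$; the same positive-definite second-order condition combined with prox-regularity and $0\in\partial f(\ox)$ is a standard second-order sufficient condition forcing $\ox$ to be a local minimizer of $f$; a final appeal to Proposition~\ref{pro1} delivers tilt stability with modulus $\kappa$.

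The main obstacle is the forward direction, specifically the step that lifts the strong monotonicity of the Lipschitz selection $h$ to a pointwise lower bound on $\langle u^{*},u\rangle$ for $u^{*}\in\partial^{2}f(\ox,0)(u)$. This requires careful identification of regular normals to the Lipschitz graph $\gph h$, invocation of Rademacher differentiability, and use of the self-adjointness of $h'$ at its differentiability points, a property special to $h$ being a selection of $(\partial f)^{-1}$ rather than of an arbitrary Lipschitz map. The reverse direction is comparatively routine, its only subtle element being the finite-dimensional compactness upgrade from the qualitative condition~(\ref{4.11a}) to a uniform quantitative bound, which is precisely where finite-dimensionality enters this otherwise general-looking argument.
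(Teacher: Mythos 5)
Your high-level skeleton (route tilt stability through Proposition~\ref{pro1} to strong metric regularity of $\partial f$, then translate via the coderivative criterion) is indeed the paper's strategy, but both of your directions break down at exactly the steps where the real work lies. In the forward direction, your reduction of $u^*\in\partial^2f(\ox,0)(u)=D^*\partial f(\ox,0)(u)$ to adjoint Jacobians of $h$ at Rademacher points is not valid: by definition \eqref{2.2}--\eqref{2.5}, limiting coderivative elements are limits of \emph{regular} normals taken at arbitrary nearby points of $\gph\partial f$, and regular normals at nondifferentiability points of the Lipschitz localization $h$ are genuinely part of this construction and are not recovered from Jacobian limits (already for $g(x)=|x|$ one has $\partial g(0)=[-1,1]$ while limits of gradients give only $\{\pm 1\}$). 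Moreover, your appeal to symmetry of $h'(x^*_k)$ --- needed to pass from $\la w,h'w\ra\ge\kappa^{-1}\|h'w\|^2$ to a bound on $\|h'^{\top}u^*_k\|$ --- is asserted, not proved; that $h$ is locally the gradient of a smooth convex function is essentially the content of the original Poliquin--Rockafellar argument, which this paper deliberately avoids. The paper instead establishes the estimate for the \emph{regular} coderivative at every nearby pair $(x,x^*)\in\gph\partial f$ directly (proof of Theorem~\ref{thm3}: test the Fr\'echet-normal inequality \eqref{4.3} with the points $v_t=(\partial f)^{-1}(x^*+t(u^*-2\kappa^{-1}u))\cap V$ and use the strong monotonicity \eqref{4.2}), and only then passes to the limiting construction at $(\ox,0)$ (Corollary~\ref{coro4}); no Rademacher or symmetry argument is needed.

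In the reverse direction there are two gaps. First, the compactness upgrade from \eqref{4.11a} to $\la u^*,u\ra\ge c\|u\|^2$ does not work as stated: the normalized graph $\{(u,u^*):u^*\in\partial^2f(\ox,0)(u),\,\|u\|=1\}$ need not be bounded in $u^*$, and normalizing by $\|u\|+\|u^*\|$ allows limit points with $u=0$ where the quadratic form vanishes; what positive-definiteness plus closedness and positive homogeneity do give is ${\rm Ker}\,\partial^2f(\ox,0)=\{0\}$ and positive-semidefiniteness, which is all the paper uses (condition (ii) of Corollary~\ref{coro4} with $r=0$). Second, and more seriously, the coderivative criterion yields \emph{metric} regularity of $\partial f$, not \emph{strong} metric regularity; whether the two coincide for prox-regular, subdifferentially continuous $f$ is precisely the paper's open Conjecture~\ref{conj}, so you cannot invoke that passage for free. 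The paper bridges this gap by exploiting semidefiniteness through the shift $g=f+\frac{\vartheta}{2}\|\cdot-\ox\|^2$, applying the coderivative criterion and Corollary~\ref{coro1} to $g$, transferring the resulting uniform growth back to $f$, and only then obtaining strong metric regularity via Theorem~\ref{thm2} (Kenderov's theorem) and tilt stability via Proposition~\ref{pro1}; likewise, local minimality of $\ox$ is a by-product of this growth estimate, not a ``standard second-order sufficient condition'' one may cite --- as stated, that citation is essentially circular, since it presupposes the theorem being proved.
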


Theorem~\ref{PR} together with Proposition~\ref{pro1} sheds light on an appropriate nonsmooth extension of the aforementioned equivalence ${\bf(b)}\Longleftrightarrow{\bf(c)}$ between strong metric regularity of the subdifferential and positive-definiteness of the Hessian for ${\cal C}^2$-smooth functions. Following this line, it is interesting to compare {\em metric regularity} and {\em strong metric regularity} of the subdifferential. As we know, these two notions are the same for ${\cal C}^2$-smooth functions. When the function $f$ is convex, the subdifferential $\partial f$ is maximal monotone, and consequently the two notions are the same yet again by the Kenderov theorem; see the proof of Theorem~\ref{thm2}. The answer is also positive for a broad class of functions given by $f(x)=f_0(x)+\delta_\O(x)$, where $f_0$ is a ${\cal C}^2$-smooth function and $\O$ is a polyhedral convex set in finite dimensions. Indeed, this follows from \cite[Theorem~3]{DR1}.

Based on the above discussion, it is tempting to {\em conjecture} that metric regularity and strong metric regularity of the subdifferential are the {\em same notions} for every prox-regular and subdifferentially continuous function on a finite-dimensional space. We leave this as a conjecture, which we plan to investigate in future work.
\begin{Conjecture}[equivalence of metric regularity and strong metric regularity]\label{conj} {\hfill\hfill \\}
Consider a function $f\colon\R^n\to\overline{\R}$ that is both prox-regular and subdifferentially continuous at $\bar{x}$ for $\bar{x}^*=0$, where $\bar{x}$ is a local minimizer of $f$. Then $\partial f$ is metrically regular at $(\bar{x},\bar{x}^{*})$ if and only if $\partial f$ is strongly metrically regular at $(\bar{x},\bar{x}^{*})$.
\end{Conjecture}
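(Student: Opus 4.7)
The plan is as follows. Since strong metric regularity trivially implies metric regularity, I focus on the converse; assume $\partial f$ is metrically regular around $(\bar{x},0)$ with modulus $\kappa$. The natural route is to verify one of the equivalent characterizations of strong metric regularity available in the Hilbert setting, namely condition $(v)$ of Corollary~\ref{coro1}. Prox-regularity of $f$ at $(\bar{x},0)$ with some constant $r>0$ already yields the pointwise inequality in $(v)$, so if the moduli can be arranged to satisfy $r\kappa<1$, Corollary~\ref{coro1} together with assertion $(iv)$ of Theorem~\ref{thm2} instantly produces strong metric regularity. The entire difficulty is therefore a compatibility question between the prox-regularity constant and the metric regularity modulus.

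To attack this, I would pass to the quadratic tilt $g:=f+\tfrac{s}{2}\|\cdot-\bar{x}\|^{2}$ for $s\ge r$. A direct computation using the hypomonotonicity formulation of prox-regularity shows that, after the shift by $s(I-\bar{x})$, the graph of $\partial g=\partial f+s(I-\bar{x})$ has a monotone localization around $(\bar{x},0)$, while $\bar{x}$ remains a local minimizer of $g$. Provided $s\kappa<1$, the Lusternik-Graves perturbation result \cite[Theorem~3.3]{DLR} (exactly as invoked in the proof of Corollary~\ref{coro1}) gives metric regularity of $\partial g$ around $(\bar{x},0)$ with modulus $\kappa/(1-s\kappa)$. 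Combining monotonicity of $\partial g$ with its metric regularity via Kenderov's theorem, as in the proof of implication $(iii)\Rightarrow(iv)$ of Theorem~\ref{thm2}, then yields strong metric regularity of $\partial g$. To transfer the single-valuedness back to $\partial f$, observe that for any $x_1,x_2$ near $\bar{x}$ with common subgradient $y\in\partial f(x_i)$ one has $x_{i}=(\partial g)^{-1}(y+s(x_{i}-\bar{x}))$; writing $S:=(\partial g)^{-1}$ and using that $S$ is locally Lipschitz with constant $L=\kappa/(1-s\kappa)$, the estimate $\|x_{1}-x_{2}\|\le sL\|x_{1}-x_{2}\|$ collapses the inverse to a singleton as soon as $sL<1$, i.e., $s\kappa<\tfrac{1}{2}$.

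The main obstacle is that this contraction-based transfer succeeds only in the quantitative regime $r\kappa<\tfrac{1}{2}$, which is not guaranteed by the qualitative hypotheses of the conjecture; pushing past this threshold appears to require a genuinely different idea. A complementary route would proceed through the generalized Hessian: by the Mordukhovich coderivative criterion, metric regularity of $\partial f$ around $(\bar{x},0)$ is equivalent to $\ker\partial^{2}f(\bar{x},0)=\{0\}$, and if positive-semi-definiteness of $\partial^{2}f(\bar{x},0)$ at the local minimizer $\bar{x}$ --- the earlier open question flagged by the authors before Theorem~\ref{PR} --- could be established, then the trivial kernel would upgrade it to the positive-definiteness condition \eqref{4.11a}, which by Theorem~\ref{PR} gives tilt-stability and hence, by Proposition~\ref{pro1}, strong metric regularity. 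The conjecture is thus sandwiched between two closely related open problems: a quantitative compatibility of the prox-regularity and metric regularity moduli, and positive-semi-definiteness of the generalized Hessian at local minimizers. The hardest step in either direction is genuinely exploiting subdifferential continuity together with the local optimality of $\bar{x}$ to upgrade the local hypomonotonicity of $\partial f$ to monotonicity --- a step for which we do not currently have a clean tool.
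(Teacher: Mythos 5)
You have not proved the statement, but neither does the paper: this is explicitly left as an \emph{open conjecture} (the authors write that they are unable to establish its validity and plan to investigate it in future work), so there is no proof of record to compare against, and your proposal candidly does not claim to close the gap. Within those limits, your partial analysis is sound and in fact mirrors what the paper actually does prove. The reduction to a moduli-compatibility question via condition (v) of Corollary~\ref{coro1} is correct (prox-regularity plus subdifferential continuity gives the inequality in (v) with some $r>0$, and the only issue is whether one can arrange $r\kappa<1$), the tilt-plus-Kenderov argument through $g=f+\tfrac{s}{2}\|\cdot-\bar{x}\|^{2}$ is the same mechanism the paper uses in Theorem~\ref{thm2} and Corollary~\ref{coro1}, and your contraction estimate correctly shows that this route only succeeds in the quantitative regime $r\kappa<\tfrac12$ (perturbing strong metric regularity of $\partial g$ back by $-s(I-\bar{x})$ via the radius theorem hits the same threshold). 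That obstruction is genuine: the hypotheses of the conjecture impose no relation between the prox-regularity constant and the regularity modulus, and the paper's own resolution of this tension is only conditional, namely Theorem~\ref{coro5}, which adds positive-semidefiniteness of $\partial^{2}f(\bar{x},0)$ as an extra hypothesis to pass from metric to strong metric regularity.

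One concrete correction to your second route: you suggest trying to establish positive-semidefiniteness of $\partial^{2}f(\bar{x},0)$ at local minimizers and then combining it with the trivial kernel from the coderivative criterion. The paper's Example~\ref{ex} shows this is false in general --- there is a fully amenable (hence prox-regular and subdifferentially continuous) function on $\R^{2}$ with a local minimizer at which the generalized Hessian is not positive-semidefinite --- so any argument along these lines must exploit the metric regularity assumption itself (or the specific structure it forces on $\gph\partial f$), not merely local optimality together with prox-regularity. In short: your proposal correctly locates the two obstacles, but the statement remains open both in the paper and in your attempt, and the one unconditional shortcut you float is ruled out by the paper's own counterexample.
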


\begin{Remark}
{\rm Conjecture~\ref{conj} is certainly false outside the class of prox-regular subdifferentially continuous functions. For example, consider the function $f(x_1,x_2)=x_1^2+x_2^2 +\delta_{\{(x,y)| xy=0\}}(x_1,x_2)$ on $\R^2$. Define $\bar{x}:=(0,0)$ and $\bar{x}^*:=(0,0)$. It's easy to see that $f$ is metrically regular at $(\bar{x},\bar{x}^{*})$ but is not strongly metrically regular there. The trouble is that $f$ is not prox-regular at $\bar{x}$ for $\bar{x}^{*}$.}
\end{Remark}

If Conjecture~\ref{conj} were true, then many known descriptions of metric regularity would be very helpful in studying and characterizing tilt stability of local minimizers. Even though we are unable to establish the validity of this conjecture, we are able to prove in our major Theorem~\ref{coro5} that metric regularity of the subdifferential combined with {\em positive-semidefiniteness} of the generalized Hessian indeed characterizes strong metric regularity of the subdifferential at local minimizers of prox-regular subdifferentially continuous functions on $\R^n$.\vspace*{0.05in}

To proceed in this direction, we derive now new generalized differential characterizations of tilt-stable local minimizers in the Hilbert space setting. The following second-order construction introduced in \cite{MN} is crucial for this purpose in infinite dimensions and also to obtain {\em quantitative} characterizations of tilt-stable minimizers for nonsmooth functions in finite-dimensional spaces.

\begin{Definition}{\bf (combined second-order subdifferential).}\label{2nd} {\rm Consider a function $f:X\to\oR$ and a pair $(\bar{x},\ox^*)\in\gph\partial f$.  The {\sc combined second-order subdifferential} of $f$ at $\ox$ relative to $\ox^*$ is the set-valued mapping $\breve\partial^2f(\ox,\ox^*):X^{**}\tto X^*$ defined by
\begin{eqnarray}\label{2.11}
\breve\partial^2f(\ox,\ox^*)(u):=\big(\Hat D^*\partial f\big)(\ox,\ox^*)(u)\quad\mbox{ for all }\;u\in X^{**}.
\end{eqnarray}}
\end{Definition}

Our next theorem characterizes tilt stability via the combined second-order subdifferential \eqref{2.11} in Hilbert spaces. Another characterization of this type has been recently established in \cite[Theorem~4.3]{MN} (see Corollary~\ref{coro3} below) by using essentially more complicated arguments.

\begin{Theorem} {\bf(second-order subdifferential characterization of tilt-stable minimizers in Hilbert spaces).}\label{thm3} Let $X$ be a Hilbert space, and let $f\colon X\to\oR$ be both prox-regular and subdifferentially continuous at $\ox$ for $\ox^*=0\in\partial f(\ox)$. Then the following are equivalent:

{\bf (i)} The point $\ox$ is a tilt-stable local minimizer of $f$ with modulus $\kk>0$.

{\bf (ii)} There is $\eta>0$ such that for any $r\in[0,\kk^{-1})$ we have
\begin{eqnarray}\label{4.1}
\begin{array}{ll}
\qquad\qquad\qquad\kk\|u^*\|\ge\|u\|\quad\mbox{and}\quad\la u^*,u\ra\ge-r\|u\|^2\\
\;\mbox{whenever }\;u^*\in\breve\partial^2f(x,x^*)(u)\;\mbox{ with }\;(x,x^*)\in\gph\partial f\cap\B_\eta(\ox,0).
 \end{array}
\end{eqnarray}

{\bf (iii)} There are two constants $\eta>0$ and $r\in[0,\kk^{-1})$ such that \eqref{4.1} holds.
\end{Theorem}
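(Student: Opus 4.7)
The implication (ii)$\Rightarrow$(iii) is immediate: the universal $\eta$ from (ii) serves any fixed $r\in[0,\kk^{-1})$. The implication (iii)$\Rightarrow$(i) is the content of~\cite[Theorem~4.3]{MN}, which will be restated as Corollary~\ref{coro3} in the sequel; for prox-regular and subdifferentially continuous $f$ on a Hilbert space, it characterizes tilt-stability of $\ox$ with modulus $\kk$ precisely by the existence of \emph{some} admissible $r$ and $\eta>0$ making \eqref{4.1} hold. Thus the genuinely new content of the theorem is the direction (i)$\Rightarrow$(ii), which asserts that the same neighborhood $\eta$ in fact works for \emph{every} admissible $r$.

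To prove (i)$\Rightarrow$(ii), combine Proposition~\ref{pro1} with Theorem~\ref{thm2}(ii) to extract neighborhoods $V$ of $\ox$ and $V^*$ of $0$, a single-valued $\kk$-Lipschitz localization $S\colon V^*\to V$ of $(\partial f)^{-1}$, and the uniform second-order growth
\[
f(x)\ge f(u)+\la u^*,x-u\ra+\frac{1}{2\kk}\|x-u\|^2,\quad x\in V,\ (u,u^*)\in\gph\partial f\cap(V\times V^*).
\]
Interchanging two graph points in this inequality yields the local strong monotonicity $\la x_1^*-x_2^*,x_1-x_2\ra\ge\kk^{-1}\|x_1-x_2\|^2$ on $\gph\partial f\cap(V\times V^*)$.

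Now fix $(x,x^*)\in\gph\partial f$ close to $(\ox,0)$ and $u^*\in\breve\partial^2 f(x,x^*)(u)=\Hat D^*\partial f(x,x^*)(u)$. By the definition of the Fr\'echet coderivative, for each $\ve>0$ there is $\delta>0$ such that
\[
\la u^*,x'-x\ra-\la u,x^{*\prime}-x^*\ra\le\ve(\|x'-x\|+\|x^{*\prime}-x^*\|)
\]
whenever $(x',x^{*\prime})\in\gph\partial f$ lies within $\delta$ of $(x,x^*)$. Substituting $(x',x^{*\prime})=(S(x^*+tv^*),x^*+tv^*)$ for arbitrary $v^*\in X^*$ and small $t>0$ and invoking $\|x'-x\|\le\kk t\|v^*\|$, this becomes $\la u^*,S(x^*+tv^*)-x\ra-t\la u,v^*\ra\le\ve(1+\kk)t\|v^*\|$. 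Choosing $v^*=u^*$, strong monotonicity forces $\la u^*,S(x^*+tu^*)-x\ra\ge 0$; dividing by $t$ and letting $\ve\dn 0$ gives $\la u^*,u\ra\ge 0$. Choosing instead $v^*=-u$ (under the Hilbert identification $X\cong X^*\cong X^{**}$) and bounding $-\la u^*,S(x^*-tu)-x\ra\le\kk t\|u^*\|\|u\|$ yields $t\|u\|^2\le\kk t\|u^*\|\|u\|+\ve(1+\kk)t\|u\|$; dividing by $t\|u\|$ (for $u\ne 0$) and sending $\ve\dn 0$ then produces $\|u\|\le\kk\|u^*\|$. Since $\la u^*,u\ra\ge 0$ implies $\la u^*,u\ra\ge -r\|u\|^2$ for all $r\ge 0$, the estimate \eqref{4.1} holds on a common neighborhood independent of $r$, which is precisely (ii).

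The principal obstacle is the implication (iii)$\Rightarrow$(i), which we delegate to~\cite{MN}. A self-contained alternative would adapt the Ekeland-based perturbation scheme of Theorem~\ref{thm1} and Lemma~\ref{lem:equiv} to the tilted function $f(\cdot)-\la v^*,\cdot\ra$ for small $v^*$, converting \eqref{4.1} first into metric regularity of $\partial f$ with modulus $\kk$ and then, via prox-regularity and the Kenderov-style argument underlying Theorem~\ref{thm2}, into strong metric regularity---after which Proposition~\ref{pro1} recovers tilt-stability with the required modulus.
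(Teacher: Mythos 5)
Your direction {\bf(i)}$\Longrightarrow${\bf(ii)} is correct and is essentially the paper's own argument: from Proposition~\ref{pro1} and Theorem~\ref{thm2} you extract the uniform growth \eqref{3.12}, hence strong monotonicity of $\gph\partial f$ near $(\ox,0)$ and a single-valued $\kk$-Lipschitz localization of $(\partial f)^{-1}$, and then you test the Fr\'echet coderivative inequality along this localization. The paper does the same with the single test direction $u^*-2\kk^{-1}u$ and obtains the stronger conclusion $\la u^*,u\ra\ge\kk^{-1}\|u\|^2$, while your two test directions give only $\la u^*,u\ra\ge 0$ and $\kk\|u^*\|\ge\|u\|$; that is still enough for {\bf(ii)}, so this part stands.

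The genuine gap is {\bf(iii)}$\Longrightarrow${\bf(i)}, which you delegate to \cite[Theorem~4.3]{MN}. That theorem (condition {\bf(iv)} of Corollary~\ref{coro3} here) assumes the positive-definiteness estimate \eqref{4.9}, $\la u^*,u\ra\ge\kk^{-1}\|u\|^2$ on a graph neighborhood, which is strictly stronger than \eqref{4.1}: condition {\bf(iii)} permits $\la u^*,u\ra$ to be negative, as low as $-r\|u\|^2$ with $r<\kk^{-1}$, together with the norm bound $\kk\|u^*\|\ge\|u\|$. So you cannot conclude tilt stability from {\bf(iii)} by citing \cite{MN}; the paper explicitly states that no direct proof of the implication from \eqref{4.1} to \eqref{4.9} is visible, and it is precisely this implication (via tilt stability) that constitutes the new content of the theorem --- the opposite of your claim that {\bf(i)}$\Longrightarrow${\bf(ii)} is the novelty. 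Your fallback sketch does not close the gap either: the first inequality in \eqref{4.1} yields metric regularity of $\partial f$ with modulus $\kk$ (via the coderivative criterion, or \cite[Lemma~6.4]{MN}), but metric regularity plus prox-regularity is not known to imply strong metric regularity --- that is exactly Conjecture~\ref{conj} of the paper --- and prox-regularity only provides the lower growth \eqref{4.4} with a constant $r_\ve$ that may well exceed $\kk^{-1}$, so Theorem~\ref{thm2}/Corollary~\ref{coro1} cannot be applied directly to $f$. Since your sketch never uses the second estimate $\la u^*,u\ra\ge-r\|u\|^2$, it cannot succeed as stated. The paper's proof hinges on that estimate: it passes to $g:=f+\frac{\vt}{2}\|\cdot-\ox\|^2$ with $\vt>\max\{r_\ve,\kk^{-1}\}$, uses the coderivative sum rule and the second estimate to verify the neighborhood coderivative criterion for $\partial g$ with modulus $(\vt-r_1)^{-1}$, applies Corollary~\ref{coro1} to $g$, and then translates the resulting quadratic growth back to $f$ with a constant $r_1<\kk^{-1}$, after which Corollary~\ref{coro1} and Proposition~\ref{pro1} applied to $f$ give tilt stability with modulus $\kk$. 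Some argument of this kind must be supplied before your proof of the theorem is complete.
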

{\bf Proof.} Suppose that {\bf(i)} holds. Employing Theorem~\ref{thm2} and Proposition~\ref{pro1} gives us neighborhoods $V$ of $\ox$ and $V^*$ of $\ox^*$ such that \eqref{3.12} is satisfied. Select $\eta>0$ such that $\B_\eta(\ox)\subset V$. It easily follows from \eqref{3.12} that for all $(x,x^*),\;(v,v^*)\in\gph\partial f\cap(V\times V^*)$ we have the estimate
\begin{eqnarray}\label{4.2}
\|v^*-x^*\|\cdot\|v-x\|\ge\la v^*-x^*,v-x\ra\ge\frac{1}{\kk}\|v-x\|^2,
\end{eqnarray}
which shows that the set $(\partial f)^{-1}(v^*)\cap V$ is a singleton for any $v^*\in V^*$. Pick now $u^*\in\breve\partial^2 f(x,x^*)(u)$ with $(x,x^*)\in\gph\partial f\cap(V\times V^*)$ and get from \eqref{2.11} that for any $\ve>0$ there is some $\delta>0$ with $\B_\delta(x,x^*)\subset V\times V^*$ and such that
\begin{eqnarray}\label{4.3}
\la u^*,v-x\ra-\la u,v^*-x^*\ra\le\ve\big(\|v-x\|+\|v^*-x^*\|\big)\;\mbox{whenever}\;(v,v^*)\in\gph\partial f\cap\B_\delta(x,x^*).
\end{eqnarray}
For small $t>0$ we define $v_t:=(\partial f)^{-1}(v^*_t)\cap V$ with $v^*_t:=x^*+t(u^*-2\kk^{-1}u)\in V^*$ and observe from \eqref{4.2} that $(v_t,v^*_t)\to(x,x^*)$ as $t\dn 0$. Consequently we get that $(v_t,v^*_t)\in\B_\delta(x,x^*)$ for such $t$. Plugging $(v_t,v^*_t)$ into \eqref{4.3} and using \eqref{4.2} ensure then that
\[\begin{array}{ll}
\ve(\kk+1)t\|u^*-2\kk^{-1}u\|&\disp=\ve(\kk+1)\|v_t^*-x^*\|\ge\ve\big(\|v_t-x\|+\|v_t^*-x^*\|\big)\\
&\ge\la u^*, v_t-x\ra-\la u,v_t^*-x^*\ra\\
&\disp=\la t^{-1}(v_t^*-x^*)+2\kk^{-1}u,v_t-x\ra-t\la u,u^*-2\kk^{-1}u\ra\\
&\disp\ge(\kk t)^{-1}\|v_t-x\|^2+2\kk^{-1}\la u,v_t-x\ra-t\la u,u^*-2\kk^{-1}u\ra\\
&\disp\ge(\kk t)^{-1}\|v_t-x\|^2-2\kk^{-1}\|u\|\cdot \|v_t-x\|-t\la u, u^*-2\kk^{-1}u\ra\\
&\disp\ge -t\kk^{-1}\|u\|^2-t\la u,u^*-2\kk^{-1}u\ra=-t\la u^*,u\ra+t\kk^{-1}\|u\|^2,
\end{array}
\]
which yields $\la u^*,u\ra+\ve(\kk+1)\|u^*-2\kk^{-1}u\|\ge\kk^{-1}\|u\|^2$. Letting $\ve\dn 0$ shows that $\la u^*,u\ra\ge\kk^{-1}\|u\|^2$ for all $u^*\in\breve\partial^2 f(x,x^*)(u)$, and then \eqref{4.1} holds whenever $r\in[0,\kk^{-1})$. Thus we arrive at {\bf (ii)}.

Since implication [{\bf(ii)$\Longrightarrow$(iii)}] is obvious, it remains to show that {\bf (iii)} implies {\bf(i)}. Assuming {\bf (iii)} with some constants $r\in[0,\kk^{-1})$ and $\eta>0$ and then using the prox-regularity and subdifferential continuity of $f$ at $\ox$ for $\ox^*$ give us real numbers $\ve,r_\ve>0$ such that
\begin{eqnarray}\label{4.4}
f(x)\ge f(u)+\la u^*,x-u\ra-\frac{r_\ve}{2}\|x-u\|^2\;\mbox{ for all }\;x\in\B_\ve(\ox),(u,u^*)\in\gph\partial f\cap \B_\ve(\ox,\ox^*).
\end{eqnarray}
Fix any $\vartheta>\max\{r_\ve,\kk^{-1}\}>r$ and define $g(x):=f(x)+\frac{\vartheta}{2}\|x-\ox\|^2$ for $x\in X$. Note from the subdifferential sum rule that $\partial g(x)=\partial f(x)+\vt(I-\ox)$, where $I:X\to X$ is the identity mapping. It follows from the open mapping theorem that there is some $\delta>0$ such that $\B_\delta(\ox,0)\subset J(\B_\gg(\ox,0))$ with $\gg:=\min\{\ve,\eta\}$ and $J(x,y):=(x, y+\vt(x-\ox))$ for $(x,y)\in X\times X$. We get from \eqref{4.4} that
\begin{eqnarray}\label{4.5}
g(x)\ge g(u)+\la u^*,x-u\ra+\frac{\vt-r_\ve}{2}\|x-u\|^2\;\mbox{if}\;x\in\B_\delta(\ox),(u,u^*)\in\gph\partial g\cap\B_\delta(\ox,\ox^*).
\end{eqnarray}
Furthermore, the coderivative sum rule from \cite[Theorem~1.62]{M1} implies that for any $u^*\in\breve\partial^2 g(x,x^*)(u)$ with
$(x,x^*)\in\gph\partial g\cap\B_\delta(\ox,0)$ we have
\begin{eqnarray}\label{4.6}
u^*\in\breve\partial^2f\big(x,x^*-\vt(x-\ox)\big)(u)+\vt u.
\end{eqnarray}
Since $J^{-1}(\B_\delta(\ox,0))\subset\B_\gg(\ox,0)$, it ensures that $(x,x^*-\vt(x-\ox))\in\B_\eta(\ox,0)$. This together with \eqref{4.1} and \eqref{4.6} readily implies the estimate
\[
\|u^*\|\cdot\|u\|\ge\la u^*,u\ra\ge(\vt-r)\|u\|^2,
\]
which yields in turn that $\|u^*\|\ge(\vt-r)\|u\|$ for all $u^*\in\breve\partial^2 g(x,x^*)(u)$. Furthermore, it is easy to deduce from \eqref{4.5}  that the set $\gph\partial g$ is locally closed around $(\ox,\ox^*)$ in the norm$\times$norm topology. Therefore the neighborhood coderivative criterion from \cite[Theorem~4.5]{M1} ensures that the subgradient mapping $\partial g$ is metrically regular around $(\ox,0)$ with modulus $(\vt-r_1)^{-1}$ for some $r_1\in(r,\kk^{-1})$. Combining this with \eqref{4.5} and applying Corollary~\ref{coro1} to the function $g$, we find neighborhoods $U$ of $\ox$ and $U^*$ of $\ox^*$ such that $U\times U^*\subset\B_\delta(\ox,0)$ and that
\begin{eqnarray}\label{4.7}
g(x)\ge g(u)+\la u^*,x-u\ra+\frac{\vt-r_1}{2}\|x-u\|^2\;\mbox{for all}\;x\in U,\,(u,u^*)\in\gph\partial g\cap(U\times U^*).
\end{eqnarray}
Consider $W:=J^{-1}(U\times U^*)$, a neighborhood of $(\ox,0)$, and get from \eqref{4.7} similarly to \eqref{3.16} that
\begin{eqnarray}\label{4.8}
f(x)\ge f(u)+\la u^*,x-u\ra-\frac{r_1}{2}\|x-u\|^2\;\mbox{for all}\;x\in U,\,(u,u^*)\in\gph\partial f\cap W
\end{eqnarray}
with $r_1<\kk^{-1}$. It follows from \eqref{4.1} and \cite[Lemma~6.4]{MN} that $\partial f$ is metrically regular around $(\ox,0)$ with modulus $\kk$. Employing finally Corollary~\ref{coro1} and Proposition~\ref{pro1} allows us to conclude that $\ox$ is a tilt-stable minimizer of $f$ with modulus $\kk$.\endproof

Observe that the first inequality in \eqref{4.1} means that the subgradient mapping $\partial f$ is metrically regular around $(\ox,0)$ due to the neighborhood coderivative criterion of \cite[Theorem~4.5]{M1}; this part is similar to \eqref{4.9} from \cite[Theorem~4.3]{MN}. On the other hand, the second estimate in \eqref{4.1} is new and it is indeed the key for establishing the full set of equivalences in our major Theorem~\ref{coro5} below. Note further that the restriction on $r\in [0,\kk^{-1})$ is {\em essential} in Theorem~\ref{thm3} and its consequences as well as in the previous results of this paper. Indeed, it is easy to verify that for any given modulus $\kk>0$ the function $f(x):=-\frac{1}{2\kk}\|x\|^2$ on an arbitrary Hilbert space $X$ satisfies condition \eqref{4.1} with $\ox=0\in X$ and every $r\ge\kk^{-1}$, but the point $\ox$ is {\em not even a local minimizer} of $f$.\vspace*{0.05in}

Since the result of \cite[Theorem~4.3]{MN} gives yet another characterization of tilt-stable minimizers in the setting of Theorem~\ref{thm3}, it must be equivalent to both conditions in (ii) and (iii) of the latter theorem. However, we cannot see a direct proof of the major part in this equivalence and recover now the result of \cite[Theorem~4.3]{MN} by using the arguments of Theorem~\ref{thm3}, which are much simpler than the original proof in \cite{MN}.

\begin{Corollary}{\bf (another second-order characterization of tilt stability in Hilbert spaces).}\label{coro3} In the setting of Theorem~{\rm\ref{thm3}} assertions {\bf (i)}--{\bf(iii)} are equivalent to:

{\bf (iv)} There is a real number $\eta>0$ such that
\begin{eqnarray}\label{4.9}
\la u^*,u\ra\ge\frac{1}{\kk}\|u\|^2\;\mbox{whenever }\;u^*\in\breve\partial^2 f(x,x^*)(u)\;\mbox{ with }\;(x,x^*)\in\gph\partial f\cap\B_\eta(\ox,0).
\end{eqnarray}
\end{Corollary}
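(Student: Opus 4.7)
The plan is to derive this corollary as a direct consequence of Theorem~\ref{thm3}. Since assertions (i)--(iii) are already mutually equivalent, it suffices to establish the single equivalence (i) $\Longleftrightarrow$ (iv), and the natural route is to prove the two implications (i) $\Longrightarrow$ (iv) $\Longrightarrow$ (iii).

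For (iv) $\Longrightarrow$ (iii), I would argue purely algebraically: the bound $\la u^*,u\ra\ge\kk^{-1}\|u\|^2$ in (iv) is pointwise stronger than both requirements appearing in \eqref{4.1}. Indeed, applying Cauchy--Schwarz on the left gives $\kk\|u^*\|\cdot\|u\|\ge\|u\|^2$, so $\kk\|u^*\|\ge\|u\|$, while the bound $\la u^*,u\ra\ge-r\|u\|^2$ for any $r\in[0,\kk^{-1})$ is automatic since $\kk^{-1}\|u\|^2\ge 0$. Hence (iv) implies (iii) with the same $\eta$ and any admissible $r$, and Theorem~\ref{thm3} closes the loop back to (i).

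For (i) $\Longrightarrow$ (iv), I would observe that the sharp inequality in (iv) is already established \emph{inside} the proof of Theorem~\ref{thm3}. Specifically, in the implication (i) $\Longrightarrow$ (ii) given there, the argument sets $v_t^*:=x^*+t(u^*-2\kk^{-1}u)$, invokes the quadratic estimate \eqref{3.12} from Theorem~\ref{thm2} along the pair $(v_t,v_t^*)$, and plugs this into the definition \eqref{2.11} of $\breve\partial^2 f$. After simplification the chain of estimates culminates in
\[
\la u^*,u\ra+\ve(\kk+1)\|u^*-2\kk^{-1}u\|\ge\kk^{-1}\|u\|^2,
\]
and letting $\ve\dn 0$ yields exactly (iv). The subsequent conversion to the form in \eqref{4.1} was merely cosmetic repackaging to match the two-inequality template; so (i) $\Longrightarrow$ (iv) requires no further analytic input.

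The only conceptual obstacle is recognizing that the required estimate is already embedded in the proof of Theorem~\ref{thm3}: the genuinely substantive step there --- combining the quadratic growth from Theorem~\ref{thm2}(ii) with the Fr\'echet coderivative definition at a perturbed point $(v_t,v_t^*)$ --- is precisely what delivers the sharp constant $\kk^{-1}$. Consequently the proof reduces to a two-line bookkeeping argument invoking Theorem~\ref{thm3} and Cauchy--Schwarz, which is the whole point of recovering \cite[Theorem~4.3]{MN} by this simpler route.
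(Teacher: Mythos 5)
Your proposal is correct and follows essentially the same route as the paper: the paper likewise notes that the implication $[{\bf(i)}\Longrightarrow{\bf(iv)}]$ is already contained in the proof of $[{\bf(i)}\Longrightarrow{\bf(ii)}]$ of Theorem~\ref{thm3} (where the estimate $\la u^*,u\ra\ge\kk^{-1}\|u\|^2$ is obtained before the ``repackaging'' into \eqref{4.1}), and then closes the loop by deducing \eqref{4.1} from \eqref{4.9} via the same Cauchy--Schwarz observation $\kk\|u^*\|\cdot\|u\|\ge\kk\la u^*,u\ra\ge\|u\|^2$. The only cosmetic difference is that you route back through assertion {\bf(iii)} while the paper targets {\bf(ii)}, which is immaterial since your argument yields \eqref{4.1} for every $r\in[0,\kk^{-1})$.
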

{\bf Proof.} The verification of the major implication [{\bf(i)}$\Longrightarrow${\bf(iv)}] follows the same lines as in the proof of [{\bf(i)}$\Longrightarrow${\bf(ii)}] in Theorem~\ref{thm3}. It remains to show that {\bf (iv)} yields assertion {\bf(ii)} of this theorem. Indeed, for any $u^*\in\breve\partial^2 f(x,x^*)(u)$ with $(x,x^*)\in\gph\partial f\cap\B_\eta(\ox,0)$ we get from \eqref{4.9} that
\[
\kk\|u^*\|\cdot\|u\|\ge\kk\la u^*,u\ra\ge\kk\|u\|^2,
\]
which implies that $\kk\|u^*\|\ge \|u\|$. This together with \eqref{4.9} verifies \eqref{4.1}.\endproof

When $X$ is finite-dimensional, we can derive from Theorem~\ref{thm3} new {\em pointwise quantitative} characterizations of tilt-stable minimizers via the generalized Hessian \eqref{2.10} at the reference point.

\begin{Corollary} {\bf(pointwise quantitative characterization of tilt stability in finite dimensions).}\label{coro4} Let $X=\R^n$ in the setting of Theorem~{\rm\ref{thm3}} and consider the following two statements:

{\bf (i)} The point $\ox$ is a tilt-stable local minimizer of the function $f$ with modulus $\kk$.

{\bf(ii)} There are numbers  $\mu>0$ and $r\in[0,\mu^{-1})$ such that for any $u\in X$ we have
\begin{eqnarray}\label{4.10}
\mu\|u^*\|\ge\|u\|\quad\mbox{and}\quad\la u^*,u\ra\ge-r\|u\|^2\quad\mbox{whenever}\quad u^*\in\partial^2f(\ox,0)(u).
\end{eqnarray}
Then implication $[${\bf (i)}$\Longrightarrow${\bf (ii)}$]$ holds with $\mu=\kk$ and any $r\in[0,\kk^{-1})$ while the converse implication is satisfied with any modulus $\kk>\mu$.
\end{Corollary}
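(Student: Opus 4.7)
\enspace I treat the two implications separately; the forward one is routine, while the converse is where the work lies.

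\noindent\textbf{Forward direction $\mathrm{(i)}\Rightarrow\mathrm{(ii)}$.} Apply Theorem~\ref{thm3} with modulus $\kk$: for every $r\in[0,\kk^{-1})$ there is $\eta>0$ so that $\kk\|u^*\|\ge\|u\|$ and $\langle u^*,u\rangle\ge-r\|u\|^2$ whenever $u^*\in\breve\partial^2 f(x,x^*)(u)$ and $(x,x^*)\in\gph\partial f\cap\B_\eta(\ox,0)$. In $\R^n$ the limiting coderivative is the outer Painlev\'e--Kuratowski limit of the combined one along $\gph\partial f$, so every $u^*\in\partial^2 f(\ox,0)(u)$ is the limit of a sequence $u_k^*\in\breve\partial^2 f(x_k,x_k^*)(u_k)$ with $(x_k,x_k^*)\to(\ox,0)$ in $\gph\partial f$ and $u_k\to u$. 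Passing to the limit in the two estimates gives~\eqref{4.10} with $\mu=\kk$ and the chosen $r$.

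\noindent\textbf{Converse $\mathrm{(ii)}\Rightarrow\mathrm{(i)}$ with any $\kk>\mu$.} Because tilt-stability with a smaller modulus is stronger, it is enough to establish tilt-stability for some $\kk_0\in(\mu,\kk)$ chosen close enough to $\mu$ that $r\kk_0<1$ (feasible since $r<\mu^{-1}$) and then to fix $r^\sharp\in(r,\kk_0^{-1})$. By implication $\mathrm{(iii)}\Rightarrow\mathrm{(i)}$ of Theorem~\ref{thm3}, it suffices to exhibit $\eta>0$ such that the two inequalities in~\eqref{4.1}, with $(\kk,r)=(\kk_0,r^\sharp)$, hold for all $(x,x^*)\in\gph\partial f\cap\B_\eta(\ox,0)$ and $u^*\in\breve\partial^2 f(x,x^*)(u)$. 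I prove this by contradiction: any putative violating sequence can, after using positive homogeneity of $\breve\partial^2 f(x_k,x_k^*)(\cdot)$ to normalize $\|u_k\|=1$, be passed to a subsequential limit $(u,u^*)$ with $u^*\in\partial^2 f(\ox,0)(u)$ and $\|u\|=1$ by outer semicontinuity of the limiting coderivative, and the limiting version of the failed inequality then contradicts~\eqref{4.10}.

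\noindent\textbf{Main obstacle.} Compactness of $\{u_k^*\}$ is the one delicate point. When the first inequality fails one automatically has $\|u_k^*\|<1/\kk_0$, so the issue only arises when the second inequality fails with $\|u_k^*\|\to\infty$: rescaling by $\|u_k^*\|$ would then yield only a unit-norm element of $\partial^2 f(\ox,0)(0)$, on which~\eqref{4.10} is vacuous. I close this gap by combining the first inequality of~\eqref{4.10}, which via the Mordukhovich coderivative criterion in $\R^n$ forces metric regularity of $\partial f$ at and around $(\ox,0)$ with modulus $\mu$, with the hypomonotonicity estimate $\langle u_k^*,u_k\rangle\ge-\rho\|u_k\|^2$ furnished by prox-regularity of $f$ at $\ox$ for $0$ (modulus $\rho$). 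These together trap $\langle u_k^*,u_k\rangle$ in a bounded interval and, in conjunction with the neighborhood form of metric regularity supplied by outer semicontinuity, rule out $\|u_k^*\|\to\infty$. This is the essentially finite-dimensional and prox-regular step of the argument.
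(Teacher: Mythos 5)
Your forward direction is fine and coincides with the paper's argument: approximate any $(u,u^*)\in\gph\partial^2f(\ox,0)$ by elements $(u_k,u^*_k)\in\gph\breve\partial^2f(x_k,x^*_k)$ with $(x_k,x^*_k)\st{}{\to}(\ox,0)$ in $\gph\partial f$ and pass to the limit in \eqref{4.1}. The problem is the converse. You correctly identified the delicate point, but the mechanism you propose does not close it. Suppose the second inequality of \eqref{4.1} fails along a sequence with $\|u_k\|=1$ and $\|u^*_k\|\to\infty$. Local hypomonotonicity coming from prox-regularity would (at best, and itself only after a nontrivial argument, since monotonicity of a localization of $\partial f+\rho I$ does not by itself yield $\la u^*,u\ra\ge-\rho\|u\|^2$ for regular coderivative elements without some maximality of the localized graph) give a \emph{lower} bound on $\la u^*_k,u_k\ra$, and metric regularity via the coderivative criterion gives a \emph{lower} bound on $\|u^*_k\|$. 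Neither controls $\|u^*_k\|$ from above: only the component of $u^*_k$ along $u_k$ is trapped, while the orthogonal component may blow up. Rescaling by $\|u^*_k\|$ then produces, exactly as you feared, a unit vector in $\partial^2f(\ox,0)(0)$, on which \eqref{4.10} says nothing, so no contradiction arises. As written, the passage from the pointbased condition \eqref{4.10} to the neighborhood condition \eqref{4.1} is not established, and this is precisely the step where a direct outer-semicontinuity argument breaks down for semidefiniteness-type conditions (as opposed to the kernel/metric-regularity condition, where the failing inequality itself bounds $\|u^*_k\|$).

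The paper avoids this obstacle entirely by a different route, which you may want to adopt: from the first inequality in \eqref{4.10} and the pointbased coderivative criterion, $\partial f$ is metrically regular around $(\ox,0)$ with any modulus $\mu_0\in(\mu,r^{-1})$; then one passes to $g:=f+\frac{\vt}{2}\|\cdot-\ox\|^2$ with $\vt$ large (larger than the prox-regularity constant $r_\ve$ from \eqref{4.4} and than $\mu_0^{-1}$), uses the second-order sum rule to get $\|u^*\|\ge(\vt-r)\|u\|$ for $u^*\in\partial^2g(\ox,0)(u)$ and hence metric regularity of $\partial g$, applies Corollary~\ref{coro1} to $g$ (whose hypothesis \eqref{3.15} holds for $g$ with nonnegative curvature thanks to prox-regularity), and transfers the resulting uniform growth back to $f$ as in \eqref{4.8} with some $r_1<\mu_0^{-1}$; Corollary~\ref{coro1} and Proposition~\ref{pro1} then give tilt stability of $f$ with modulus $\mu_0$, and arbitrariness of $\mu_0>\mu$ yields the stated conclusion. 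If you insist on going through Theorem~\ref{thm3}(iii), you would need an additional argument (not supplied in your sketch) excluding the unbounded-coderivative scenario; the quadratic-shift argument is the way the paper sidesteps that need.
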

{\bf Proof.} Assume that {\bf(i)} holds and deduce from Theorem~\ref{thm3}(ii) the validity of \eqref{4.1} with some $\eta>0$ for any $r\in[0,\kk^{-1})$. Pick any $(u,u^*)\in\gph\partial^2 f(\ox,0)$ and, arguing by contradiction, suppose that it does not satisfy at least one of the inequalities in \eqref{4.10}. Then the limiting relationship between the second-order constructions \eqref{2.10} and \eqref{2.11} induced by that in \eqref{2.3a} for the normal cones \eqref{2.3} allows us to approximate $(u,u^*)$ in $\R^n\times\R^n$ by some sequence of $(u_k,u^*_k)\in\gph\breve\partial^2f(x_k,x^*_k)$ with $(x_k,x^*_k)\in\gph\partial f\cap\B_\eta(\ox,0)$ such that the corresponding inequality in \eqref{4.1} does not hold for $(u_k,u^*_k)$ as $k\to\infty$. This contradiction justifies {\bf (ii)} with $\mu=\kappa$ and any $r\in[0,\kk^{-1})$.

Now suppose that {\bf(ii)} is satisfied. Choosing any $\mu_0\in(\mu, r^{-1})$, we get  from {\bf(ii)} by the (pointwise) coderivative criterion from  \cite[Corollary~4.3]{mor93} and \cite[Theorem~9.43]{RW} that the subgradient mapping $\partial f$ is metrically regular around $(\ox,0)$ with modulus $\mu_0$. Let us proceed as in the proof of Theorem~\ref{thm3} and define $g(x):=f(x)+\frac{\vt}{2}\|x-\ox\|^2$ with some $\vt>\max\{\mu_0^{-1},r_\ve\}>r$ and $\ve, r_\ve$ taken from \eqref{4.4}. Then we deduce that $\partial g(x)=\partial f(x)+\vt(I-\ox)$. It follows from \cite[Theorem~1.62]{M1} that $u^*-\vt u\in\partial^2 f(\ox,0)(u)$ for any $u^*\in\partial^2g(\ox,0)(u)$. This together with \eqref{4.10} gives us that
\[
\|u^*\|\cdot\|u\|\ge\la u^*,u\ra\ge(\vt-r)\|u\|^2,
\]
which yields in turn that $\|u^*\|\ge(\vt-r)\|u\|$. Employing again the aforementioned coderivative criterion ensures that the mapping $\partial g$ is metrically regular around $(\ox,0)$ with modulus $(\vt-r_1)^{-1}$ for some $r_1\in(r,\mu_0^{-1})$. Similarly to \eqref{4.8} we find neighborhoods $U$ of $\ox$ and $W$ of $(\ox,\ox^*)$ with
\begin{eqnarray*}
f(x)\ge f(u)+\la u^*,x-u\ra-\frac{r_1}{2}\|x-u\|^2\quad\mbox{for all}\quad x\in U,\,(u,u^*)\in\gph\partial f\cap W.
\end{eqnarray*}
Since $r_1\mu_0<1$, combining Corollary~\ref{coro1} and Proposition~\ref{pro1} gives us that $\ox$ is a tilt-stable local minimizer of $f$ with modulus $\mu_0$. Taking into account that $\mu_0$ was chosen arbitrarily in $(\mu,r^{-1})$, we conclude that $\kk$ in {\bf(i)} can be chosen arbitrary in $(\mu,\infty)$, which completes the proof. \endproof

Finally, we are ready to derive the culminating result of this section that establishes appropriate nonsmooth counterparts of the equivalent conditions {\bf (a)}--{\bf (d)} formulated above for ${\cal C}^2$-smooth functions. It confirms, in particular, that the generalized Hessian \eqref{2.10} can appropriately substitute the classical Hessian for the large class of prox-regular and subdifferentially continuous functions on $\R^n$, which plays a pivoting role in second-order variational analysis.

\begin{Theorem}{\bf(equivalent second-order conditions for prox-regular functions in finite dimensions).}\label{coro5} Let $f\colon\R^n\to\overline{\R}$ be both prox-regular and subdifferentially continuous at $\ox$ for $0\in\partial f(\ox)$. Then the following conditions are equivalent:

{\bf (a)} The subgradient mapping $\partial f\colon\R^n\tto\R^n$ is metrically regular around $(\bar{x},0)$ and the generalized Hessian $\partial^2 f(\ox,0)$ is positive-semidefinite in the sense that
\begin{eqnarray}\label{4.11}
\la u^*,u\ra\ge 0\quad\mbox{whenever}\quad u^*\in\partial^2f(\ox,0)(u),\;u\ne 0.
\end{eqnarray}

{\bf (b)} The subgradient mapping $\partial f$ is strongly metrically regular around $(\bar{x},0)$ and $\ox$ is a local minimizer of $f$.

{\bf (c)} The generalized Hessian $\partial^2f(\ox,0)$ is positive-definite in the sense of \eqref{4.11a}.

{\bf (d)} ${\rm Ker}\,\partial^2f(\ox,0)=\{0\}$ and $\partial^2f(\ox,0)$ is positive-semidefinite in the sense of \eqref{4.11}.\\[1ex]
Furthermore, each of the equivalent conditions {\bf (a)}--{\bf (d)} is necessary and sufficient for the point $\ox$ being a tilt-stable local minimizer of the function $f$.
\end{Theorem}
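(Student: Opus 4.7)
The plan is to assemble a cycle of implications that folds every condition onto tilt stability, leveraging the tools we have built up in this section: Theorem~\ref{PR} (Poliquin--Rockafellar), Proposition~\ref{pro1}, Corollary~\ref{coro4}, and the pointwise Mordukhovich coderivative criterion for metric regularity in finite dimensions. Specifically, I would prove
\[
\mathrm{(c)}\;\Leftrightarrow\;\text{tilt stable}\;\Leftrightarrow\;\mathrm{(b)}\;\Rightarrow\;\mathrm{(a)}\;\Leftrightarrow\;\mathrm{(d)}\;\Rightarrow\;\mathrm{(c)},
\]
which taken together yields every equivalence and simultaneously ties each of (a)--(d) to tilt stability, thereby covering the last assertion of the theorem.

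The easier half I would dispatch first. By Theorem~\ref{PR}, condition (c) is equivalent to $\ox$ being a tilt-stable local minimizer of $f$, and by Proposition~\ref{pro1} together with the automatic local minimality of a tilt-stable point, condition (b) is likewise equivalent to tilt stability with the same modulus. For tilt stable $\Rightarrow$ (a), strong metric regularity plainly forces metric regularity, while Corollary~\ref{coro4} applied at the tilt modulus $\kk$ yields inequality \eqref{4.10} with $\mu=\kk$ for \emph{every} $r\in[0,\kk^{-1})$; letting $r\dn 0$ in the second estimate forces $\la u^*,u\ra\ge 0$ for all $u^*\in\partial^2f(\ox,0)(u)$, which is exactly \eqref{4.11}. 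The link (a) $\Leftrightarrow$ (d) is then the pointwise coderivative criterion of \cite[Corollary~4.3]{mor93} and \cite[Theorem~9.43]{RW}: metric regularity of $\partial f$ around $(\ox,0)$ is equivalent to ${\rm Ker}\,\partial^2f(\ox,0)=\{0\}$, and both (a) and (d) carry the identical positive-semidefiniteness clause \eqref{4.11}. Local closedness of $\gph\partial f$ around $(\ox,0)$, needed to invoke the criterion, is granted by the assumed prox-regularity and subdifferential continuity.

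The closing implication (d) $\Rightarrow$ tilt stability (which then routes back to (c) via Theorem~\ref{PR}) is the delicate one. From ${\rm Ker}\,\partial^2f(\ox,0)=\{0\}$ the coderivative criterion provides not merely qualitative metric regularity but a quantitative pointwise bound: there exists $\mu>0$, taken strictly above the regularity modulus, such that $\mu\|u^*\|\ge\|u\|$ for every $u^*\in\partial^2f(\ox,0)(u)$. Coupling this with the positive-semidefiniteness $\la u^*,u\ra\ge 0$ from (d), one may take $r=0\in[0,\mu^{-1})$, so both inequalities of \eqref{4.10} are satisfied. Corollary~\ref{coro4} then certifies that $\ox$ is a tilt-stable local minimizer. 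The main subtlety I anticipate---really the only place where care is genuinely needed---is to ensure that the coderivative criterion is applied in its \emph{quantitative} finite-dimensional form, so that the constant $\mu$ in $\mu\|u^*\|\ge\|u\|$ is actually extracted rather than merely the qualitative kernel condition $u^*=0\Rightarrow u=0$. Once that modulus-level bound is in hand, the rest of the argument is a matter of threading together the prior results of the section, and the final statement about necessity and sufficiency for tilt stability is immediate from the cycle above.
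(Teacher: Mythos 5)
Most of your cycle coincides with the paper's own argument: the equivalence (a)$\Leftrightarrow$(d) via the pointwise coderivative criterion (using that $\gph\partial f$ is locally closed around $(\bar{x},0)$ thanks to prox-regularity and subdifferential continuity), the passage (d)$\Rightarrow$ tilt stability by upgrading ${\rm Ker}\,\partial^2f(\bar{x},0)=\{0\}$ to a quantitative bound $\mu\|u^*\|\ge\|u\|$ (you are right that this upgrade, which uses positive homogeneity, closedness of the coderivative graph and compactness of the unit sphere in $\R^n$, is the step that needs care) followed by Corollary~\ref{coro4}(ii)$\Rightarrow$(i) with $r=0$, and the link between tilt stability and (b) through Proposition~\ref{pro1}. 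All of that is sound and is essentially how the paper proceeds.

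The genuine problem is your appeal to Theorem~\ref{PR} for the two directions involving (c). Inside this paper that theorem carries no independent proof: the text explicitly states that the Poliquin--Rockafellar result is to be \emph{deduced} from Theorem~\ref{coro5}, and the equivalence of (c) with tilt stability is literally part of what Theorem~\ref{coro5} asserts. Quoting Theorem~\ref{PR} therefore makes the development circular and forfeits the advertised simplified proof of that result. The repair is cheap and is what the paper does. For (c)$\Rightarrow$(d) nothing beyond the definitions is needed: if $0\in\partial^2f(\bar{x},0)(u)$ for some $u\ne0$, then $\la 0,u\ra=0$ contradicts \eqref{4.11a}; hence (c)$\Rightarrow$(d)$\Rightarrow$ tilt stability by your own argument, with no use of Theorem~\ref{PR}. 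For tilt stability $\Rightarrow$(c) you cannot get away with letting $r\dn0$ in \eqref{4.10}: that only yields the semidefiniteness $\la u^*,u\ra\ge0$ appearing in (a) and (d), not the strict inequality of \eqref{4.11a}. What produces (c) is the sharper estimate $\la u^*,u\ra\ge\kappa^{-1}\|u\|^2$ for $u^*\in\breve\partial^2f(x,x^*)(u)$ at all $(x,x^*)\in\gph\partial f$ near $(\bar{x},0)$, i.e.\ Corollary~\ref{coro3}(iv) (established in the proof of Theorem~\ref{thm3}); passing to the pointwise limit exactly as in the proof of Corollary~\ref{coro4} gives $\la u^*,u\ra\ge\kappa^{-1}\|u\|^2>0$ whenever $u^*\in\partial^2f(\bar{x},0)(u)$ and $u\ne0$. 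With that substitution your cycle closes and matches the paper's proof.
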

{\bf Proof.} As mentioned above, the subgradient mapping $\partial f$ for each $f$  from the class of functions under consideration is locally closed around $(\ox,0)$, and thus the equivalence between conditions {\bf (a)} and {\bf (d)} follows directly from the coderivative criterion without any additional assumption; see the detailed discussion after formula \eqref{ker}. By Corollary~\ref{coro4} condition {\bf (d)} is equivalent to $\ox$ being a tilt-stable local minimizer of $f$. This ensures that {\bf (d)} is equivalent to {\bf (b)} by Proposition~\ref{pro1}. By [{\bf(i)}$\Longrightarrow${\bf(ii)}] in Corollary~\ref{coro4} with $r=0$, we deduce that tilt stability of $\ox$ yields the validity of {\bf (c)}. Since implication [{\bf (c)}$\Longrightarrow${\bf (d)}] is obvious, this completes the proof of the theorem.\endproof

If $\ox$ is a local minimizer of $f$, then $0\in\partial f(\ox)$ and we have all the equivalences of Theorem~\ref{coro5} for prox-regular and subdifferentially continuous functions. If furthermore $f$ is ${\cal C}^2$-smooth around $\ox$, then the generalized Hessian $\partial^2f(\ox,0)$ reduces to the classical one \eqref{c2} and its positive-semidefiniteness \eqref{4.11} holds automatically at local minimizers. Hence in this case the equivalent conditions in Theorem~\ref{coro5} go back to those {\bf (a)}--{\bf(d)} listed at the beginning of this section.

We now observe in passing that the positive semi-definiteness condition \eqref{4.11} is not a necessary condition for optimality at local minimizers of prox-regular and subdifferentially regular functions. In fact it is not necessary even within a very special subclass of it, bridging between smoothness and convexity while covering at the same time a great many of functions that are of interest as the essential objective in minimization problems. Recall \cite[Definition~10.23]{RW} that $f\colon\R^n\to\oR$ is {\em fully amenable} at $\ox\in\dom f$  if there exist a mapping $h\colon\R^n\to\R^m$ that is ${\cal C}^2$-smooth around $\ox$ and a piecewise linear-quadratic function $\ph\colon\R^m\to\oR$ such that $f=\ph\circ h$ around $\ox$ and
\begin{equation}\label{fa}
\partial^\infty\ph\big(h(\ox)\big)\cap{\rm Ker}\,\nabla h(\ox)^*=\{0\},
\end{equation}
where the {\em singular/horizon subdifferential} of $\ph\colon\R^m\to\oR$ at $\oy\in\dom\ph$ is defined by
$$
\partial^\infty\ph(\oy):=\big\{v\in\R^m\big|\;(v,0)\in N_{\epi\ph}\big(\oy,\ph(\oy)\big)\big\}.
$$
We refer the reader to \cite{M1,RW} for the latter subdifferential construction and to \cite{RW} for properties and applications of fully amenable functions.

The following illustrative example shows that the semidefiniteness property \eqref{4.11} of the generalized Hessian is {\em not a necessary condition} for local minimizers of fully amenable functions $f$.

\begin{Example}{\bf (positive-semidefiniteness of the generalized Hessian is not necessary for optimality)}.\label{ex}{\rm Define the function $f:\R^2\to\oR$ by
\begin{equation}\label{ex-f}
f(x):=(x^2_1-x^2_2)+\delta_\O(x_1,x_2)\;\mbox{ with }\;\O:=\big\{(x_1,x_2)\in\R^2\big|\;-x_1+x_2\le0, -x_1-x_2\le0\big\}.
\end{equation}
Observe that $\ox:=(0,0)$ is a {\em local minimizer} of $f$. Moreover, it follows from the definition that $f$ is {\em fully amenable} at $\bar{x}$ since it is the sum of a ${\cal C}^2$-smooth function and the fully amenable composition $\delta_\O=\delta_{\R^2_+}\circ h$ with  $h(x_1,x_2):=(x_1-x_2,x_1+x_2)$. By the sum rule for second-order subdifferentials \cite[Proposition~1.121]{M1} we get
\begin{eqnarray}\label{nn}
\partial^2 f(\ox,0)(u)=(2u_1,-2u_2)+\partial^2\delta_\O(\ox,0)(u)\quad \mbox{for all}\quad u\in \R^2.
\end{eqnarray}
Note further that $\O$ is a polyhedral in $\R^2$ and the {\em critical cone} $K_\O(\ox,0):=N_\O(\ox)^*\cap\{0\}^\perp$ is $\O$. Due to \cite[Proposition~4.4]{PR2} we have $0\in\partial^2\delta_\O(\ox,0)(\ou)$ with $\ou:=(0,1)$, since $\ou\in K_\O(\ox,0)-K_\O(\ox,0)=\R^2$ and $0\in (K_\O(\ox,0)-K_\O(\ox,0))^*$. It follows from \eqref{nn} that $(0,-2)\in \partial^2 f(\ox,0)(\ou)$.

Since $\la (0,-2),\ou\ra=-2<0$,  the positive-semidefiniteness condition \eqref{4.11} is {\em violated} at $\ou$. Thus it is {\em not} a necessary condition for local optimality in problems of minimizing nonsmooth fully amenable functions on $\R^2$.}
\end{Example}

\section{Concluding Remarks}
\setcounter{equation}{0}

In this paper we established close relationships between some versions of second-order/quadratic growth for extended-real-valued functions and fundamental metric regularity and subregularity properties of their subdifferentials. Then we used these relationships to develop new second-order characterizations of tilt stability of local minimizers for a general class of prox-regular and subdifferentially continuous functions, which have been well recognized as an appropriate polygon of second-order variational analysis in the absence of the classical ${\cal C}^2$-smoothness. In this way we extended several equivalent second-order conditions from the ${\cal C}^2$-smooth case to the prox-regular one under an additional assumption formulated via the generalized Hessian.
We moreover conjectured that this assumption was necessary. We aim to investigate this conjecture in future work.

There are two natural lines of future research in the direction of this paper. One is to implement and further develop the results obtained here for minimizing general extended-real-valued functions to specific models of constrained optimization, e.g., nonlinear and extended nonlinear programming, conic programming, mathematical programs with equilibrium constraints, etc. Some results of this type related, in particular, to previous characterizations of tilt stability have been recently obtained in \cite{MN,MN1,MO,MOR,MR}. Another appealing line of research is to extend the obtained new characterizations of tilt stability in optimization to the more general and important case of {\em full stability}, the notion introduced by Levy, Poliquin and Rockafellar in \cite{LPR}; see also \cite{mrs} for recent results on full stability in the framework of extended-real-valued functions and their applications to particular classes of problems in finite-dimensional constrained optimization.
\bigskip

{\bf Acknowledgements.} The authors thank Steve Robinson for pointing out an inaccuracy in an example appearing in an earlier version of the manuscript.

\end{document}